\newtheorem{theorem}{Theorem}[section]
\newtheorem{lemma}[theorem]{Lemma}
\newtheorem{prop}[theorem]{Proposition}
\theoremstyle{definition}
\newtheorem{definition}[theorem]{Definition}
\newtheorem{cor}[theorem]{Corollary}
\theoremstyle{remark}
\newtheorem{remark}[theorem]{Remark}
\numberwithin{equation}{section}
\newcommand{\C}{\mathbb{C}}
\newcommand{\Z}{\mathbb{Z}}
\newcommand{\Span}{{\rm span}}
\newcommand{\Zhu}{{\rm Zhu}}
\newcommand{\Res}{{\rm Res}}
\newcommand{\End}{{\rm End}}
\newcommand{\Id}{{\rm Id}}
\newcommand{\vertex}{(V, Y, {\mathbf 1}, \omega)}
\def \bp{\begin{prop}\label}
\def \ep{\end{prop}}
\def \bt{\begin{theorem}\label}
\def \et{\end{theorem}}
\begin{document}

\title[Higher level Zhu algebras are subquotients]{Higher level Zhu algebras are subquotients of universal enveloping algebras}

\author{Xiao He}

\address{D\'epartement de math\'ematiques et de statistique,\\ Universit\'e Laval, Qu\'ebec, Canada}
\email{xiao.he.1@ulaval.ca}


\subjclass[2010]{17B05, 17B69}
\keywords{Higher level Zhu algebras, vertex algebras, universal enveloping algebras}

\begin{abstract}
We prove that  higher level Zhu algebras of a vertex operator algebra are isomorphic to subquotients of its universal enveloping algebra. 
\end{abstract}

\maketitle

\section*{Introduction}
The Zhu algebra of a vertex operator algebra plays very important roles. For example, there is a one-to-one correspondence between the isomorphism classes of irreducible admissible modules of the vertex operator algebra and the isomorphism classes of irreducible modules of its Zhu algebra \cite{Zhu, Zhu96}. The Zhu algebra was first introduced by Y. C. Zhu in his thesis paper \cite{Zhu} in 1990. Then in 1998, Dong, Li and Mason \cite{DLM} generalized the Zhu algebra to a series of associative algebras which we call higher level Zhu algebras. Dong, Li and Mason proved similar results about the correspondence between representations of higher level Zhu algebras and those of vertex operator algebras. In  \cite{FZ}, Frenkel and Zhu mention that the Zhu algebra is isomorphic to a subquotient of the universal enveloping algebra. In this paper, we generalize this statement and prove that higher level Zhu algebras are isomorphic to subquotients of the universal enveloping algebra. 

The organization of this paper is as follows. In the first section, we give a review of vertex operator algebras, their modules, the Zhu algebra and higher level Zhu algebras. Then we recall the universal enveloping algebra of a vertex operator algebra in the second section. Section 3 is the core of this paper, where we establish the isomorphisms between higher level Zhu algebras and subquotients of the universal enveloping algebra.

\bigskip
\section{Vertex operator algebras, their modules, the Zhu algebra and higher level Zhu algebras.}
In this section, we briefly recall the definitions, mainly for fixing the notation. For details, we refer to the papers \cite{DLM, FHL, Zhu}.

All vector spaces and tensor products are considered over the complex numbers $\C$.

\bigskip 
\subsection{Vertex operator algebras and their modules} 
For a vector space $U$, the space of formal Laurent series in $z$ with coefficients in $U$ is defined to be
$$U[[z, z^{-1}]]:=\left\{\sum_{n\in \Z}u_nz^n~|~u_n\in U\right\}.$$
It contains the subspace of Laurent series,
$$U((z)):=\left\{\sum_{n\in \Z}u_nz^n~|~u_n\in U, u_n=0 \mbox{~for~} n\ll 0\right \}.$$
The formal differential and the formal residue of an element in $U[[z, z^{-1}]]$ are defined to be 
$$\dfrac{d}{dz}\left(\sum_{n\in \Z}u_nz^n\right):=\sum_{n\in \Z}nu_nz^{n-1},\qquad \Res_z\left(\sum_{n\in \Z}u_nz^n\right):=u_{-1}.$$

\begin{definition}
A \emph{vertex operator algebra} is a quadruple $\vertex$,  where $V=\bigoplus_{n\in \Z}V_n$ is a $\Z$-graded vector space with $\dim \, V_n<\infty$ for all $n\in \Z$, $V_n=0$ for $n\ll 0$, and $Y$ is a  linear map from $V$ to  $\End\,V[[z, z^{-1}]]$ sending $v\in V$ to $Y(v, z):=\sum_{n\in \Z}v_nz^{-n-1}$, which we call  the \emph{vertex operator} of $v$, that satisfies the following four axioms:
\begin{itemize}
\item [(1)] The \emph{vacuum element} ${\mathbf 1 }\in V$ satisfies: $Y({\mathbf 1}, z)=\Id_V$ and $v_{i}{\mathbf 1}=\delta_{i, -1}v$ for all $v\in V$ and $i\geq -1$.   
\item[(2)] $Y(u, z)v\in V((z)),  \mbox{~i.e., $u_nv=0$ whenever $n\gg 0$, for all~} u, v\in V.$ We call this the \emph{truncation condition}.
\item[(3)] $\omega$ is a special element in $V$, which we call a \emph{Virasoro element}. Let $L(n):=w_{n+1}$. Then 
$$[L(m), L(n)]=(m-n)L(m+n)+\delta_{m+n, 0}\dfrac{m^3-m}{12}\mathbf{c},$$
where $\mathbf{c} \in \C$ is called the \emph{rank} or $\emph{central charge}$ of $V$. Moreover, 
$$L(0)|_{V_n}=n\Id_{V_n}\quad \mbox{and} \quad Y(L(-1)u, z)=\dfrac{d}{dz}Y(u, z) \mbox{~for all~} u\in V.$$
\item[(4)] The \emph{Jacobi identity}: For $\ell, m, n\in \Z$ and $u, v\in V$,
$$\sum_{i\geq 0}(-1)^i{\ell \choose i}\left(u_{m+\ell-i}v_{n+i}-(-1)^{\ell}v_{n+\ell-i}u_{m+i}\right)=\sum_{i\geq 0}{m\choose i}(u_{\ell+i}v)_{m+n-i}.$$
\end{itemize} 
\end{definition}
\bigskip
From the Jacobi identity, we can get the following two important formulas.  By setting $\ell=0$ in the Jacobi identity, we have
\begin{align*}
 [u_m, v_n]=\sum_{i\geq 0}{m\choose i}(u_iv)_{m+n-i}.\qquad \mbox{\tag*{(Commutator formula)}}                      
\end{align*}
By setting $m=0$ in the Jacobi identity, we have
\begin{align*}
(u_\ell v)_n=\sum_{i\geq 0}(-1)^i{\ell\choose i}(u_{\ell-i}v_{n+i}-(-1)^\ell v_{\ell+n-i}u_i).\qquad \mbox{\tag*{(Iterate formula)}}
\end{align*}
An element $v\in V_n$ is called homogeneous of conformal weight $n$, and we denote by $\Delta_v=n$. Whenever we use the notation $\Delta_v$, we assume that $v$ is homogeneous. 

\begin{definition}
A \emph{weak module} for a vertex operator algebra $V$ is a vector space $M$, with a linear map $Y_M: V \rightarrow \End\, M[[z, z^{-1}]]$ sending $v$ to $Y_M(v, z)=\sum_{}v^M_nz^{-n-1}$, that satisfies:
\begin{itemize}
\item [(1)] $Y_M({\mathbf 1}, z)=\Id_M$ and $Y_M(v, z)w\in M((z))  \mbox{~for all~} v\in V, w\in M.$
\item[(2)] For $\ell, m, n\in \Z \mbox{~and~} u, v\in V$,
$$\sum_{i\geq 0}(-1)^i{\ell\choose i}\left(u^M_{m+\ell-i}v^M_{n+i}-(-1)^{\ell}v^M_{n+\ell-i}u^M_{m+i}\right)=\sum_{i\geq 0}{m\choose i}(u_{\ell+i}v)^M_{m+n-i}.$$
\end{itemize}
A weak module $M$ is called \emph{admissible} if it is also $\Z_{\geq 0}$-graded $M=\bigoplus_{n\geq 0}M_n$ and satisfies:
\begin{itemize}
\item[(3)] For any homogeneous element $v\in V$, we have 
$$v_n^MM_m\subseteq M_{m+\Delta_v-n-1}.$$ 
\end{itemize}
\end{definition}
\bigskip
Submodules, quotient modules, simple modules and semisimple modules can be defined in the obvious way. 

\bigskip 
\subsection{Zhu algebra}
Let $\vertex$ be a vertex operator algebra. Following \cite{Zhu}, 
we will construct an associative algebra $\Zhu(V)$ associated to $V$.

Let 
$$O(V):=\Span\{u\circ v~|~ u, v\in V\},$$
where the linear product $\circ$ is defined on homogeneous $u\in V$ by 
\begin{equation}
u\circ v:=\Res_{z}\left(Y(u,z)v\frac{(1+z)^{\Delta_ u}}{z^{2}}\right)=\sum_{i\geq 0}{\Delta_u\choose i}u_{i-2}v.
\end{equation}

Define a product $*$ on $V$ by the formula:
\begin{align}
u*v&:=\Res_z\left(Y(u,z)v\frac{(1+z)^{\Delta_u}}{z}\right)=\sum_{i\geq 0}{\Delta_u\choose i}u_{i-1}v.
\end{align}

The subspace $O(V)$ is known to be a two-sided ideal of $V$ under $*$ (cf. \cite{Zhu}).

Let
$$\Zhu(V):=V/O(V).$$

\begin{theorem}\cite{Zhu}.  The product $*$ induces an associative algebra structure on $\Zhu(V)$ with identity ${\bf 1}+O(V).$ The element $\omega+O(V)$ is central in $\Zhu(V)$. 
\end{theorem}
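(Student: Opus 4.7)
The plan is to verify the three claims in sequence, taking the already-stated fact that $O(V)$ is a two-sided $*$-ideal as given, so that $*$ descends to a well-defined bilinear operation on $\Zhu(V)$.

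First I would record two convenient auxiliary products, parametrized by $n\geq 0$:
\[
u *_n v := \Res_z\!\left(Y(u,z)v\,\frac{(1+z)^{\Delta_u+n}}{z^{n+1}}\right),\qquad
u \circ_n v := \Res_z\!\left(Y(u,z)v\,\frac{(1+z)^{\Delta_u+n}}{z^{n+2}}\right),
\]
so that $u*v = u*_0 v$ and $u\circ v = u\circ_0 v$. A direct identity from the binomial expansion gives $u*_n v - u*_{n-1} v = u\circ_{n-1} v$, and a small induction shows that each $u\circ_n v$ lies in $O(V)$ (the point is that the extra factors of $(1+z)$ only produce further elements of the form $u'\circ v'$ with $u'$ homogeneous). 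Therefore $u*_n v \equiv u*v \pmod{O(V)}$ for every $n\geq 0$; this flexibility in the choice of $n$ is the main tool.

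For the identity, $\mathbf{1}*v=v$ is immediate from $\mathbf{1}_{-1}=\Id_V$ and $\mathbf{1}_i v=0$ for $i\geq 0$. To show $v*\mathbf{1}\equiv v\pmod{O(V)}$, I use the skew-symmetry $Y(u,z)v=e^{zL(-1)}Y(v,-z)u$ to rewrite $v*\mathbf{1}$ as a residue involving $Y(\mathbf{1},-z)v=v$ combined with $e^{zL(-1)}$, and then extract the principal term $v$, absorbing the remainder via the vacuum axiom and the $L(-1)$-derivative property; the leftover terms are of shape $(L(-1)v')$-combinations that one rewrites as elements $u\circ v''$, hence in $O(V)$.

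The heart of the proof is associativity, $(u*v)*w \equiv u*(v*w)\pmod{O(V)}$. I would start from the iterate formula applied to $u_{\ell}v$ with a well-chosen $\ell$, multiply both sides by the appropriate generating function $(1+z)^{\Delta_u}(1+w)^{\Delta_v}/\text{(monomials in }z,w)$, and take a double residue; the left side reassembles into $(u*v)*w$ (after invoking the freedom to replace $*$ by any $*_n$ modulo $O(V)$), while the right side becomes $u*(v*w)$ plus a correction which is a sum of terms of the form $u_i(v\circ w)$ and $(u\circ v')*w$, both visibly in $O(V)$ since $O(V)$ is a two-sided ideal. The main obstacle is precisely this bookkeeping: one must choose the correct weight shifts $\ell, m, n$ in the Jacobi identity so that the two sides of the generating-function residue match $(u*v)*w$ and $u*(v*w)$ exactly, with all error terms landing in $O(V)$.

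Finally, for centrality of $\omega$, I compute $\omega*v - v*\omega$. Using the commutator formula, this difference equals
\[
\Res_z\!\left(\bigl[Y(\omega,z),Y(v,-\ldots)\bigr]\cdots\right)
= \sum_{i\geq 0}\binom{\Delta_\omega}{i}\bigl(\omega_{i-1}v - v\,(\text{shifted})\bigr),
\]
which after using $L(-1)v = \omega_0 v$ and $L(0)v=\Delta_v v$ collapses to a multiple of $(L(-1)+L(0))v$. A standard identity shows $(L(-1)+L(0))v \in O(V)$ for every $v$ (it is exactly the class of $u\circ\mathbf{1}$-type expressions after applying skew-symmetry), so $\omega*v \equiv v*\omega\pmod{O(V)}$, giving centrality.
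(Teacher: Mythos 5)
The paper does not prove this theorem at all --- it is quoted verbatim from Zhu's thesis \cite{Zhu}, and even the fact that $O(V)$ is a two-sided ideal is only cited. So there is nothing in the paper to compare your argument against; what you have written is an outline of Zhu's original proof, and the overall strategy (reduce everything to the lemma that $\Res_z\bigl(Y(u,z)v\,(1+z)^{\Delta_u+n}/z^{m+2}\bigr)\in O(V)$, then prove associativity by generating-function residues and the iterate formula, and get centrality of $\omega$ from $\omega*v-v*\omega\equiv (L(-1)+L(0))v=v\circ\mathbf{1}\in O(V)$) is the right one. Two small points first: your auxiliary $*_n$ and $\circ_n$ clash with the paper's notation, where $u*_nv$ and $u\circ_nv$ denote the \emph{different} operations defining the level $n$ Zhu algebra (denominators $z^{n+m+1}$ with an alternating sum, and $z^{2n+2}$); and $v*\mathbf{1}=\sum_{i\geq 0}\binom{\Delta_v}{i}v_{i-1}\mathbf{1}=v_{-1}\mathbf{1}=v$ holds on the nose by the vacuum axiom as stated, so no skew-symmetry is needed there.

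The genuine gaps are in the two steps that carry all the weight. First, your justification of the key lemma is wrong as stated: expanding the ``extra factors of $(1+z)$'' in $(1+z)^{\Delta_u+n}/z^{n+2}$ produces terms $\Res_z\bigl(Y(u,z)v\,(1+z)^{\Delta_u}/z^{m+2}\bigr)$ with $m>0$, which are \emph{not} of the form $u'\circ v'$ --- the pole order is too high. The actual mechanism is that $(L(-1)u)\circ v\in O(V)$ combined with $Y(L(-1)u,z)=\frac{d}{dz}Y(u,z)$ lets you integrate by parts and trade one order of pole at $z=0$ for a lower power of $(1+z)$; the induction must be run on the two-parameter statement $\Res_z\bigl(Y(u,z)v\,(1+z)^{\Delta_u+n}/z^{m+2}\bigr)\in O(V)$ for all $m\geq n\geq 0$, not just the diagonal $m=n$. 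Without this the whole proof has no foundation. Second, the associativity paragraph is a plan rather than a proof: you correctly name the ingredients (iterate formula, double residues, the freedom to shift the power of $(1+z)$ modulo $O(V)$), but you explicitly defer the ``bookkeeping,'' which is precisely where the content lies --- one must establish the two congruences expressing $(u*v)*w$ and $u*(v*w)$ as double residues of $Y(Y(u,z)v,w)w'$ and $Y(u,z)Y(v,w)w'$ respectively, and check that the discrepancy consists of terms with pole order at least $2$ in one variable, which land in $O(V)$ by the (corrected) lemma. As written, the proposal would not compile into a complete proof without supplying both of these arguments.
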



For an admissible $V$-module $M=\bigoplus_{n\geq 0}M_n$, we call $M_n$ the $n$-th level and $M_0$ the top level of $M$.  Denote by $o^M(u):=u^M_{ \Delta_u-1}$ for all homogeneous $u\in V$ and extend linearly to $V$. Then $o^M(u)M_n\subseteq M_n$. In particular, $o^M(u)$ preserves the top level.

\begin{lemma}\cite{Zhu}. For an admissible $V$-module $M$, when restricted to the top level $M_0$, the identities $$o^M(u)o^M(v)=o^M(u*v) \mbox{~~and~~} o^M(u')=0$$
hold for all $u, v\in V$ and $u'\in O(V)$. Thus, the top level $M_0$ is a $\Zhu(V)$-module under the action $(u+O(V))\cdot m=o^M(u)m$. 
\end{lemma}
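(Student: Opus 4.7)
Both identities reduce to mode computations using the module Jacobi identity combined with the admissibility grading condition (3). The basic observation is that for homogeneous $u$ and $m \in M_0$, the grading $u^M_n M_0 \subseteq M_{\Delta_u - n - 1}$ forces $u^M_n m = 0$ when $n > \Delta_u - 1$, with $u^M_n m$ remaining in $M_0$ precisely when $n = \Delta_u - 1$; analogous bounds constrain compositions of modes. The proof is then a matter of expanding $o^M(u*v)$ and $o^M(u \circ v)$ and matching the surviving terms on $M_0$ against $o^M(u)o^M(v)$ and $0$ respectively.

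\textbf{Associativity step.} From $u * v = \sum_{i \geq 0}\binom{\Delta_u}{i} u_{i-1}v$ and the fact that $u_{i-1}v$ is homogeneous of conformal weight $\Delta_u + \Delta_v - i$,
\begin{align*}
o^M(u*v)\, m = \sum_{i \geq 0}\binom{\Delta_u}{i}(u_{i-1}v)^M_{\Delta_u + \Delta_v - i - 1}\, m.
\end{align*}
I apply the module iterate formula (the $m = 0$ case of the module Jacobi identity with $\ell = i-1$) to each mode $(u_{i-1}v)^M_n$, rewriting it as an alternating sum of compositions $u^M_a v^M_b$ and $v^M_c u^M_d$. After interchanging the order of the double sum and recognizing that the weights $\binom{\Delta_u}{i}$ encode the expansion of $(1+z)^{\Delta_u}$, and after discarding all summands whose index pattern forces them into $M_{<0} = 0$, the only surviving composition is $u^M_{\Delta_u - 1} v^M_{\Delta_v - 1} m = o^M(u)o^M(v)\, m$.

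\textbf{Vanishing step and conclusion.} Since $O(V) = \Span\{u \circ v\}$, it suffices to prove $o^M(u \circ v)\, m = 0$ on $M_0$. The expansion $u \circ v = \sum_{i \geq 0}\binom{\Delta_u}{i} u_{i-2} v$ differs from $u*v$ only by the shift $i-1 \mapsto i-2$, i.e., by an extra factor of $z^{-1}$ inside the defining residue. Running the same iterate-formula analysis with this shift, the binomial identity coming from $(1+z)^{\Delta_u}$ now pairs the two halves of the iterate formula in such a way that their contributions on $M_0$ cancel exactly. Once both identities are established, the assignment $(u + O(V)) \cdot m := o^M(u)\, m$ is well-defined and multiplicative, yielding the $\Zhu(V)$-module structure on $M_0$. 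I expect the hard part to be the combinatorial bookkeeping in the vanishing step: in contrast to the associativity step, where the grading bounds already leave only one surviving composition, here both $u^M_a v^M_b$ and $v^M_c u^M_d$ contribute to $M_0$, and one must verify the precise alternating-binomial cancellation between them rather than rely on a single distinguished term to survive.
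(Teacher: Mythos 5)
The paper offers no proof of this lemma --- it is quoted from \cite{Zhu} --- so your proposal can only be measured against the standard argument, whose overall route (expand $o^M(u*v)$ and $o^M(u\circ v)$, apply the module iterate formula, then use the grading bound $u^M_n M_0\subseteq M_{\Delta_u-n-1}$ together with binomial identities) you have correctly identified. However, your description of where the work happens is wrong in a way that leaves a genuine gap. In the associativity step you claim that ``the grading bounds already leave only one surviving composition.'' That is false. Writing
$$(u_{i-1}v)^M_{\Delta_u+\Delta_v-i-1}=\sum_{j\geq 0}(-1)^j\binom{i-1}{j}\left(u^M_{i-1-j}v^M_{\Delta_u+\Delta_v-i-1+j}-(-1)^{i-1}v^M_{\Delta_u+\Delta_v-2-j}u^M_j\right),$$
the grading does reduce the first half to the single term $i=\Delta_u$, $j=0$, i.e.\ $o^M(u)o^M(v)m$; but in the second half every composition $v^M_{\Delta_u+\Delta_v-2-j}u^M_j m$ with $0\leq j\leq \Delta_u-1$ lands in $M_0$ and is generically nonzero (for instance $j=\Delta_u-1$ contributes a multiple of $o^M(v)o^M(u)m$). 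These terms disappear only after summing over $i$, via
$$\sum_{i=0}^{\Delta_u}(-1)^i\binom{\Delta_u}{i}\binom{i-1}{j}=0\quad\text{for }0\leq j\leq\Delta_u-1,$$
which holds because $\binom{i-1}{j}$ is a polynomial in $i$ of degree $j<\Delta_u$ and the $\Delta_u$-th finite difference of such a polynomial vanishes. This identity is the actual engine of both halves of the lemma, and your proposal never states it.

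Your account of the vanishing step is likewise inverted: for $o^M(u\circ v)m$ the shift $i-1\mapsto i-2$ makes the entire $u^M v^M$ half die from the grading alone (one needs $i\geq\Delta_u+j+1$, incompatible with $\binom{\Delta_u}{i}\neq 0$), while the $v^M u^M$ half is killed by the same finite-difference identity applied to $\binom{i-2}{j}$ --- there is no cancellation \emph{between} the two halves to verify. So the plan is salvageable, but as written the mechanism you assign to each step would not produce a proof; you need to isolate and prove the alternating binomial identity and apply it to the $v^Mu^M$ terms in both computations. (You should also note that the argument as sketched uses $\binom{\Delta_u}{i}=0$ for $i>\Delta_u$, i.e.\ it implicitly assumes the relevant weights are nonnegative, as in Zhu's original setting.)
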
  

The correspondence $M\mapsto M_0$ gives a functor, which we denote by $\Omega_0$, from the category of admissible $V$-modules to the category of $\Zhu(V)$-modules. On the other hand, Zhu constructed another functor $L^0$ from the category of $\Zhu(V)$-modules to the category of admissible $V$-modules in his thesis paper \cite{Zhu}. Given a $\Zhu(V)$-module $U$ with action $\pi$, $L^0(U)$ is an admissible module for $V$ with top level being $U$. Moreover, we have $\pi(v)m=o^{L^0(U)}(v)m$ for all $m\in U$ and $v\in V.$

\begin{theorem}\cite{Zhu}.
The two functors $\Omega_0, L^0$ are mutually inverse to each other when restricted to the full subcategory of completely reducible admissible $V$-modules and the full subcategory of completely reducible $\Zhu(V)$-modules. 
\end{theorem}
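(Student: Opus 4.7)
The plan is to prove the equivalence by reducing it to simple objects and then extending via direct sums. The direction $\Omega_0\circ L^0\simeq \Id$ is essentially tautological from the construction: by the stated property of $L^0$, the admissible module $L^0(U)$ has top level equal to $U$, with the action $u+O(V)\mapsto o^{L^0(U)}(u)$ on this top level agreeing with the given $\Zhu(V)$-action $\pi$ on $U$. Applying $\Omega_0$ therefore recovers $U$, and naturality in $U$ is automatic from the universal character of $L^0$.

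For the harder direction $L^0\circ \Omega_0\simeq \Id$, I would first establish it for an irreducible admissible $V$-module $M$ and then extend. The key intermediate claim is that $M_0$ is irreducible as a $\Zhu(V)$-module. Given a nonzero $\Zhu(V)$-submodule $N\subseteq M_0$, the $V$-submodule of $M$ generated by $N$ equals $M$ by irreducibility of $M$. Using the admissibility grading condition $v_n^MM_m\subseteq M_{m+\Delta_v-n-1}$, one shows that the top-level component of this $V$-submodule is already contained in $\Span\{o^M(v)\cdot x : v\in V,\, x\in N\}$, which by the lemma above equals $N$ itself. Hence $N=M_0$. Once $M_0$ is known to be irreducible, the universal property of $L^0$ provides a nonzero $V$-module homomorphism $L^0(M_0)\to M$ restricting to the identity on top levels. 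Irreducibility of $M$ makes this map surjective, and its kernel, being a graded $V$-submodule of $L^0(M_0)$ meeting the top level only at zero, must vanish by the defining maximality in the construction of $L^0$; thus $L^0(\Omega_0(M))\cong M$.

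For a completely reducible admissible $V$-module $M=\bigoplus_i M^{(i)}$, both functors respect direct sums, so $L^0(\Omega_0(M))\cong \bigoplus_i L^0(\Omega_0(M^{(i)}))\cong \bigoplus_i M^{(i)}\cong M$; the composition $\Omega_0\circ L^0$ for completely reducible $\Zhu(V)$-modules is handled symmetrically.

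The main obstacle is the irreducibility transfer between $M$ and $M_0$, together with the verification that $L^0(M_0)$ is ``minimal enough'' to be isomorphic to $M$. Controlling which compositions of modes can land back in the top level---so that a $\Zhu(V)$-submodule $N$ cannot be enlarged within $M_0$ by the full $V$-action---rests on the admissibility grading together with the associative relations $o^M(u)o^M(v)=o^M(u*v)$ and $o^M(u')=0$ for $u'\in O(V)$. The universal construction of $L^0(U)$, typically realized as a quotient of an induced $V$-module by the maximal graded submodule meeting the top level trivially, is where the bulk of the vertex-algebraic input is invested.
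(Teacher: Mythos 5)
The paper does not actually prove this statement: it is quoted verbatim from Zhu's thesis \cite{Zhu} as background, so there is no in-paper proof to compare against. Judged on its own, your outline reproduces the standard strategy of Zhu's original argument (reduce to irreducibles, transfer irreducibility between $M$ and $M_0$, use the universal/maximality properties of $L^0$, then pass to direct sums), and the structure is sound.

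The one genuine gap is the step you yourself flag but do not carry out: the claim that the degree-zero component of the $V$-submodule of $M$ generated by a $\Zhu(V)$-submodule $N\subseteq M_0$ is contained in $\Span\{o^M(v)\cdot x : v\in V,\ x\in N\}$. This does not follow from the grading condition and the identities $o^M(u)o^M(v)=o^M(u*v)$, $o^M(u')=0$ alone; one must take an arbitrary product $v^1_{n_1}\cdots v^k_{n_k}x$ of modes landing in degree $0$ and, using the commutator formula to push annihilating modes to the right and the iterate formula to fuse the survivors, rewrite it as a single $o^M(w)x$. This mode-reordering argument is precisely the content of Lemma 3.1 of the present paper (in the level-$n$ setting, via Corollary \ref{keycoro}), and it is where all the vertex-algebraic work lives; without it the irreducibility of $M_0$ is not established. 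A second, minor, point: the natural map supplied by the construction goes from the induced module $\bar M(M_0)$ onto $M$, and it factors through $L^0(M_0)$ only after one checks that its kernel coincides with the maximal graded submodule meeting $M_0$ trivially (which follows from irreducibility of $M$); as written, your arrow $L^0(M_0)\to M$ presupposes the inclusion of submodules that you are in effect trying to prove, though this is repaired in one line by running the comparison in the other direction.
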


\bigskip 
\subsection{Higher level Zhu algebras}
Let $\vertex$ be a vertex operator algebra. Following \cite{DLM}, we are going to construct an associative algebra $A_n(V)$ for each nonnegative integer $n$, which we will call the \emph{level $n$ Zhu algebra}\footnote{We follow the terminology as in the reference \cite{Jethro} for the twisted case.}, with $A_0(V)$ being exactly the Zhu algebra $\Zhu(V)$. We will call the algebras $A_n(V)$ \emph{higher level Zhu algebras} when $n\geq 1.$

Recall that $L(n)=\omega_{n+1}$, where $\omega$ is the Virasoro element of $V$. For $n\geq 0$, let 
$$O_n(V):=\Span\{u\circ_n v, L(-1)u+L(0)u~|~ u, v\in V\},$$
where the linear product $\circ_n$ is defined on homogeneous $u\in V$ by 
\begin{align}
u\circ_n v:&=\Res_{z}\left(Y(u,z)v\frac{(1+z)^{\Delta_ u+n}}{z^{2n+2}}\right) \nonumber \\
&=\sum_{i=0}^{\infty}{\Delta_u+n\choose i}u_{i-2n-2}v.
\end{align}

Define a product $*_n$ on $V$ by the formula:
\begin{align}
u*_nv&:=\sum_{m=0}^{n}(-1)^m{m+n\choose n}\Res_z\left(Y(u,z)v\frac{(1+z)^{\Delta_u+n}}{z^{n+m+1}}\right) \nonumber\\
&=\sum_{m=0}^n\sum_{i=0}^{\infty}(-1)^m{m+n\choose n}{\Delta_u+n\choose i}u_{i-m-n-1}v.  
\end{align}

The subspace $O_n(V)$ is a two-sided ideal of $V$ under $*_n$ (cf. \cite{DLM}).

Let
$$A_n(V):=V/O_{n}(V).$$

\begin{theorem}\cite{DLM}.  The product $*_n$ induces an associative algebra structure on $A_n(V)$ with identity ${\bf 1}+O_n(V).$ The identity map on $V$ induces a surjective algebra homomorphism from $A_n(V)$ to $A_{n-1}(V).$
\end{theorem}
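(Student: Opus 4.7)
The plan is to split the theorem into two parts: first establish that $(A_n(V), *_n, \mathbf{1} + O_n(V))$ is an associative unital algebra, then construct the surjective homomorphism $A_n(V) \twoheadrightarrow A_{n-1}(V)$. For the unit, computing $\mathbf{1} *_n v$ directly uses the vacuum relation $\mathbf{1}_i v = \delta_{i,-1} v$ to collapse the double sum and return $v$, so $\mathbf{1} *_n v = v$ holds on the nose. The right identity $v *_n \mathbf{1} \equiv v \pmod{O_n(V)}$ is more delicate: using $v_k \mathbf{1} = 0$ for $k \geq 0$ together with $v_{-k-1} \mathbf{1} = \tfrac{1}{k!} L(-1)^k v$, one writes $v *_n \mathbf{1}$ as a polynomial in $L(-1)$ applied to $v$, and then iteratively applies the generator $L(-1) u + L(0) u \in O_n(V)$ to reduce everything modulo $O_n(V)$ to a multiple of $v$, which a binomial computation pins down to be $v$ itself.

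The main task is associativity. Following Zhu's strategy in the $n=0$ case, I would first establish an enlarged family of generators inside $O_n(V)$: namely, residues of the form
\begin{equation*}
\Res_z Y(u,z) v \, \frac{(1+z)^{\Delta_u + n + r}}{z^{2n+2+s}}
\end{equation*}
for suitable pairs $(r,s)$, obtained by iterated application of $L(-1) u + L(0) u \in O_n(V)$ (which transforms a residue of one of the above forms into another, shifting $r$ and $s$ in controlled ways). With this enlarged family available, the difference $(u *_n v) *_n w - u *_n (v *_n w)$ is expanded using the iterate formula for $(u_\ell v)_k$ and the commutator formula for $[u_m, v_k]$---both consequences of the Jacobi identity---and the resulting linear combinations of terms $(u_a v)_b w$ and $u_c v_d w$ are regrouped and recognized as elements of the enlarged family. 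This binomial coefficient matching is the computational heart of the argument.

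For the surjection $A_n(V) \twoheadrightarrow A_{n-1}(V)$, it suffices to show $O_n(V) \subseteq O_{n-1}(V)$ and $u *_n v \equiv u *_{n-1} v \pmod{O_{n-1}(V)}$ for all $u, v \in V$. The inclusion $O_n(V) \subseteq O_{n-1}(V)$ follows from rewriting
\begin{equation*}
\frac{(1+z)^{\Delta_u + n}}{z^{2n+2}} = \frac{(1+z)^{\Delta_u + (n-1) + 1}}{z^{2(n-1)+2 + 2}},
\end{equation*}
which places $u \circ_n v$ inside the enlarged family for $O_{n-1}(V)$; the other generator $L(-1)u + L(0)u$ is already a generator of $O_{n-1}(V)$. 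The compatibility of products is obtained similarly: decompose each factor $(1+z)^{\Delta_u + n}/z^{n+m+1}$ appearing in the defining formula of $*_n$ into a part matching the corresponding factor of $*_{n-1}$ plus a remainder, and absorb the remainder into $O_{n-1}(V)$ via the enlarged family. Surjectivity is automatic since the map is induced from the identity on $V$.

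The principal obstacle is associativity: even the $n=0$ case in Zhu's thesis requires several pages of careful binomial bookkeeping, and the higher-level version compounds this by having to juggle both the $(1+z)^{\Delta_u + n}$ shift and the $z^{2n+2}$ shift simultaneously. The enlarged family of ideal elements has to be proved robust enough to absorb every cross term arising from the associativity expansion; identifying the right linear combinations is where the proof becomes technically heavy, and it is the reason the enlarged family must be established as a preparatory lemma before touching associativity itself.
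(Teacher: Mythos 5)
The paper does not prove this theorem; it is quoted verbatim from Dong--Li--Mason \cite{DLM}, so there is no internal proof to compare against. Your outline is essentially the standard argument of \cite{DLM}: the direct vacuum computation for $\mathbf{1}*_n v=v$, the reduction of $v*_n\mathbf{1}$ modulo the generator $L(-1)u+L(0)u$, the preparatory lemma enlarging $O_n(V)$ to contain all residues $\Res_z Y(u,z)v\,(1+z)^{\Delta_u+n+r}/z^{2n+2+s}$ (the precise condition you leave as ``suitable pairs'' is $s\geq r\geq 0$), and the inclusion $O_n(V)\subseteq O_{n-1}(V)$ obtained by rewriting the kernel of $\circ_n$ as a member of the enlarged family at level $n-1$. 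This is a correct strategy with no conceptual gap; what remains unexecuted is exactly the binomial bookkeeping you identify, which is where the published proof spends its effort.
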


\begin{remark}
Note that $L(-1)u+L(0)u=u \circ \mathbf{1}$ and $u\circ_0 v=u\circ v$, so $O_0(V)$ coincides with $O(V)$. Moreover, as $u*_0v=u*v$, the algebra  $A_0(V)=\Zhu(V)$ is just the Zhu algebra.
\end{remark}

We have an inverse system of associative algebras:
\begin{equation}\label{zhualgebrasystem}
A_0(V) \twoheadleftarrow A_{1}(V)\twoheadleftarrow \cdots \twoheadleftarrow A_n(V)\twoheadleftarrow A_{n+1}(V)\twoheadleftarrow \cdots.
\end{equation}

These higher level Zhu algebras play similar roles to that of the Zhu algebra in the representation theory of vertex operator algebras. To describe the relationship between the representations of $A_n(V)$ and those of $V$, we recall a Lie algebra associated to $V$. 

Consider the vector space $V\otimes \C[t, t^{-1}]$ and the linear operator $$\partial:=L(-1)\otimes \Id+\Id\otimes \dfrac{d}{dt}.$$ Let 
$$\hat{V}:=\dfrac{V\otimes \C[t, t^{-1}]}{\partial (V\otimes \C[t, t^{-1}])}.$$
Denote by $v(m)$ the image of $v\otimes t^m$ in $\hat{V}$ for $v\in V$ and $m\in \Z$. The vector space $\hat{V}$ is a $\Z$-graded Lie algebra by defining the degree of $v(m)$ to be $\Delta_v-m-1$ and the Lie bracket: 
\begin{equation}\label{Liebracket}
[u(m), v(n)]=\sum_{i\geq 0}{m\choose i}(u_iv)(m+n-i) \mbox{~for $u, v\in V$}.
\end{equation} 
As the Lie bracket (\ref{Liebracket}) in $\hat{V}$ is the just the commutator formula in $V$, the natural map from $\hat{V}$ to $\End\, V$ sending $v(m)$ to $v_m$ is a Lie algebra homomorphism. In this way, we can consider a $V$-module as a $\hat{V}$-module.

Denote the homogeneous subspace of $V$ of degree $m$ by $\hat{V}(m)$. Then $\hat{V}(0)$ is a Lie subalgebra of $\hat{V}$. Consider the Lie algebra structure of $A_n(V)$ with Lie bracket $[u, v]=u*_n v-v*_n u$ for $u, v\in V$. 

\begin{lemma}\cite{DLM}.
There is a surjective Lie algebra homomorphism from $\hat{V}(0)$ to $A_n(V)$ for each $n$, sending $o(v):=v(\Delta_v-1)$ to $v+O_n(V)$.
\end{lemma}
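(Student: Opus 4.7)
The plan is to define $\phi: \hat{V}(0) \to A_n(V)$ on spanning elements by $\phi(o(v)) = v + O_n(V)$, extend by linearity, and verify in sequence that $\phi$ is well-defined, surjective, and compatible with Lie brackets. Surjectivity is immediate, since every coset $v + O_n(V)$ is the image of $o(v)$, so the real work lies in well-definedness and in the Lie-bracket property.

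For well-definedness I would analyse the defining relations of $\hat{V}$. Applying $\partial = L(-1) \otimes \Id + \Id \otimes d/dt$ to $u \otimes t^k$ for homogeneous $u$ yields the relation $(L(-1)u)(k) + k\, u(k-1) = 0$ in $\hat{V}$. A degree count shows that both terms lie in degree $\Delta_u - k$, so the relations inside $\hat{V}(0)$ are precisely those obtained by setting $k = \Delta_u$, namely $(L(-1)u)(\Delta_u) + \Delta_u\, u(\Delta_u - 1) = 0$. Under the proposed $\phi$ this becomes $L(-1)u + \Delta_u u + O_n(V)$, which vanishes because $L(0)u = \Delta_u u$ on $V_{\Delta_u}$ and $L(-1)w + L(0)w \in O_n(V)$ by the very definition of $O_n(V)$.

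The main obstacle is the Lie-bracket property. Using the bracket formula in $\hat{V}$ together with the identity $\Delta_{u_iv} = \Delta_u + \Delta_v - i - 1$, I would first rewrite
\[
[o(u), o(v)] \;=\; \sum_{i \geq 0} \binom{\Delta_u - 1}{i}\, o(u_i v),
\]
so that the Lie homomorphism property reduces to the congruence
\[
\sum_{i \geq 0} \binom{\Delta_u - 1}{i}\, u_i v \;\equiv\; u *_n v - v *_n u \pmod{O_n(V)}.
\]
To establish this, my strategy is to write each side as a formal residue of $Y(u,z)v$ (respectively $Y(v,z)u$) integrated against an explicit rational function of $z$, and then use the skew-symmetry $Y(v,z)u = e^{zL(-1)} Y(u,-z)v$ to convert the $v *_n u$ term into an expression involving only $u$ acting on $v$. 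The $L(-1)$-tails produced by this substitution are absorbed by repeated application of $L(-1)w + L(0)w \equiv 0 \pmod{O_n(V)}$, while the polynomial bookkeeping in the coefficients $\binom{m+n}{n}$ and $\binom{\Delta_u+n}{i}$ appearing in $*_n$ and $\circ_n$ is precisely calibrated so that the residual terms vanish modulo $\Span\{u \circ_n v\}$. Matching these rational-function identities against the definitions of $O_n(V)$ is the technical heart of the proof, and is essentially the computation already carried out in \cite{DLM}.
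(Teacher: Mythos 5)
Your proposal is correct and follows exactly the argument of the cited source \cite{DLM}; the paper itself states this lemma without proof, so there is nothing different to compare against. Your reduction of the bracket property to the congruence $\sum_{i\geq 0}\binom{\Delta_u-1}{i}u_iv\equiv u*_nv-v*_nu \pmod{O_n(V)}$ (via $\Delta_{u_iv}=\Delta_u+\Delta_v-i-1$) and your identification of the degree-zero relations $\partial(u\otimes t^{\Delta_u})$ are both right; the one remaining computation, the skew-symmetry argument establishing that congruence, is legitimately deferred to \cite{DLM} since that is where the paper itself sources the entire lemma.
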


For an admissible module $M$ and  a nonnegative integer $n$, define the subspace 
\begin{equation}
\Omega_n(M):=\{m\in M~|~\hat{V}(-k)m=0 \mbox{~if~} k>n\}.
\end{equation} 

\begin{theorem}\cite{DLM}.
Let $M=\bigoplus_{i\geq 0}M_i$ be an admissible $V$-module.
\begin{itemize} 
\item[(i)] The subspace $\Omega_n(M)$ admits an $A_n(V)$-module structure under the action $v\cdot m=o^M(v)m$, with each $M_i$ being a submodule for $0\leq i\leq n$.
\item[(ii)]If $M$ is simple, then $\Omega_n(M)=\bigoplus_{0\leq i\leq n}M_i$ and each $M_i$ for $0\leq i\leq n$ is a simple $A_n(V)$-module. Moreover, $M_i$ and $M_j$ are inequivalent $A_n(V)$-modules when $i\neq j$.
\end{itemize}
\end{theorem}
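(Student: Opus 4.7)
For part (i), I first verify that $\Omega_n(M)$ is stable under $o^M(v)$ for every $v\in V$. Given $m\in\Omega_n(M)$, homogeneous $v$, and any $w\in\hat V(-k)$ with $k>n$, passing through the Lie-algebra map $\hat V\to \End M$ and using $o(v)\in\hat V(0)$,
\[
w\cdot o^M(v)\,m=o^M(v)(w\cdot m)+[w,o(v)]\cdot m.
\]
The first term vanishes because $w\cdot m=0$, and $[w,o(v)]\in\hat V(-k)$ by the grading of (\ref{Liebracket}), so the second annihilates $m$ as well. Each $M_i$ for $0\le i\le n$ lies in $\Omega_n(M)$ automatically: $\hat V(-k)$ lowers admissible degree by $k$, and $M_{i-k}=0$ whenever $k>n\ge i$. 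Moreover $o^M(v)$ preserves the admissible grading, so each $M_i$ will be an $A_n(V)$-submodule once the action is established.

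The technical core of (i) is showing that $v\mapsto o^M(v)$ factors through $A_n(V)=V/O_n(V)$ and is multiplicative with respect to $*_n$ when applied to vectors in $\Omega_n(M)$. The relation $o^M(L(-1)v+L(0)v)=0$ follows immediately from $Y_M(L(-1)v,z)=\partial_zY_M(v,z)$. The remaining identities
\[
o^M(u*_n v)\,m=o^M(u)\,o^M(v)\,m\quad\text{and}\quad o^M(u\circ_n v)\,m=0\qquad(m\in\Omega_n(M))
\]
are the principal obstacle. The plan is to expand the definitions (1.3)--(1.4), apply the module Jacobi identity to rewrite the composition $o^M(u)\,o^M(v)$ as a sum of iterated modes, and cancel the surplus terms using the defining vanishing $u^M_{\Delta_u+k-1}m=0$ for $k>n$. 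The binomial coefficients built into $*_n$ and $\circ_n$ are tuned precisely to enable this cancellation; this generalises Zhu's $n=0$ computation with considerably heavier bookkeeping, since modes that were simply killed for $n=0$ now survive up to level $n$.

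For part (ii), I first prove $\Omega_n(M)=\bigoplus_{0\le i\le n}M_i$ when $M$ is simple. The defining condition of $\Omega_n(M)$ is homogeneous, so $\Omega_n(M)$ is a graded subspace and $\supseteq$ is already established. For the reverse inclusion: in a simple admissible module, $L^M(0)$ acts as a scalar on each $M_i$ and the $V$-submodule generated by any nonzero vector equals $M$. If $0\ne m\in M_j\cap\Omega_n(M)$ with $j>n$, then a PBW-type degree analysis of $U(\hat V)\cdot m$ would confine this submodule to a proper graded subspace of $M$, contradicting simplicity. For the simplicity of each $M_i$ as an $A_n(V)$-module, I would invoke an $L^n$ analogue of Zhu's inverse construction $L^0$: a nonzero proper $A_n(V)$-submodule $N\subsetneq M_i$ would yield an admissible $V$-module whose simple quotient has level-$i$ component $N$, contradicting simplicity of $M$ and forcing $N=M_i$. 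Finally, $M_i$ and $M_j$ are inequivalent for $i\ne j$ because the central element $\omega+O_n(V)\in A_n(V)$ acts on $M_k$ as $o^M(\omega)|_{M_k}=L^M(0)|_{M_k}$, a scalar of the form $\Delta_M+k$ for some fixed conformal weight $\Delta_M$; distinct $k$ give distinct eigenvalues.
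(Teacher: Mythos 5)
First, a point of reference: the paper does not prove this theorem at all --- it is quoted with a citation to \cite{DLM} --- so your proposal can only be judged on its own terms, and on those terms it has two genuine gaps at exactly the load-bearing points. The peripheral steps you give are fine: the stability of $\Omega_n(M)$ under $o^M(v)$ via the graded commutator, the inclusion $M_i\subseteq\Omega_n(M)$ for $i\le n$, and the inequivalence of the $M_i$ via the central element $\omega+O_n(V)$ (though the assertion that $L(0)$ acts on $M_k$ by the scalar $\lambda+k$ is itself a small lemma, needing a Schur-type argument applied to $L(0)$ minus the degree operator). But for part (i), the identities $o^M(u*_nv)m=o^M(u)o^M(v)m$ and $o^M(u\circ_nv)m=0$ on $\Omega_n(M)$ \emph{are} the theorem; announcing that one will ``expand the definitions, apply the Jacobi identity, and cancel'' restates the problem rather than solving it, and nothing in your write-up indicates why the particular binomial weights in $*_n$ and $\circ_n$ make the cancellation close up.

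Second, and more seriously, the ``PBW-type degree analysis'' in part (ii) would fail as described. After discarding PBW monomials ending in a factor of degree $\le -(n+1)$ (which kill $m$), the surviving lowering operators have degrees in $\{-n,\dots,-1\}$, and compositions of such operators can carry $M_j$ with $j>n$ all the way down to $M_0$: for instance a factor of degree $-n$ followed by one of degree $-1$ maps $M_{n+1}$ to $M_0$. So $U(\hat V)m$ is not confined to a proper graded subspace by degree bookkeeping alone. The missing ingredient --- which is what \cite{DLM} actually use both here and for the simplicity of the $M_i$ --- is the single-mode spanning lemma for weak modules generated by one vector, $U(\hat V)m=\Span\{u_km \mid u\in V,\ k\in\Z\}$. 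Granting it, $M_i=\Span\{u_{\Delta_u-1+j-i}m\}$ and $u(\Delta_u-1+j-i)\in\hat V(i-j)$ annihilates $m\in\Omega_n(M)$ whenever $j-i>n$, so $M_i=0$ for all $i<j-n$, contradicting $M_0\ne 0$; and simplicity of $M_i$ follows because $M_i=o(V)m$ for any nonzero $m\in M_i$. Your alternative route to simplicity through $L^n$ is also circular in spirit, since the construction of $L^n$ presupposes the $A_n(V)$-module structure whose existence is the content of part (i).
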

Since there is a surjective homomorphism $A_n(V)\twoheadrightarrow A_{n-1}(V)$, the subspace $\Omega_{n-1}(M)\subset \Omega_n(M)$ is naturally an $A_n(V)$-module. Let
\begin{equation*}
\Omega_n/\Omega_{n-1}(M):=\dfrac{\Omega_n(M)}{\Omega_{n-1}(M)}.
\end{equation*}
Then $\Omega_n/\Omega_{n-1}$ defines a functor from the category of admissible $V$-modules to the category of $A_n(V)$-modules. The good thing is that this functor has an inverse when restricted to an appropriate subcategory. In \cite{DLM}, the authors constructed a functor $L^n$ from the category of $A_n(V)$-modules to the category of admissible $V$-modules, such that, for a given $A_n(V)$-module $U$ with action $\pi$,  $\Omega_n/\Omega_{n-1}(L^n(U))\cong U$ as $A_n(V)$-modules, i.e., $o^{L^n(U)}(v)m=\pi(v)m$ for all $v\in V$ and $m\in U$. 

\begin{theorem}\cite{DLM}.
The two functors $\Omega_n/\Omega_{n-1}$ and $L^n$ are inverse to each other when restricted to the full subcategory of completely reducible admissible $V$-modules and the full subcategory of completely reducible $A_n(V)$-modules which do not factor through $A_{n-1}(V)$. 
\end{theorem}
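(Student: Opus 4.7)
The plan is to reduce the statement to a bijection on isomorphism classes of simple objects and then handle the two directions separately. By complete reducibility, both functors respect direct sums, so it suffices to establish natural isomorphisms of the two compositions on simples.

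Direction 1: $\Omega_n/\Omega_{n-1} \circ L^n \cong \mathrm{id}$. The construction of $L^n$ already yields $\Omega_n/\Omega_{n-1}(L^n(U)) \cong U$ as $A_n(V)$-modules via the identification $\pi(v)m = o^{L^n(U)}(v)m$. What remains is to verify that $L^n(U)$ is a simple admissible $V$-module whenever $U$ is a simple $A_n(V)$-module not factoring through $A_{n-1}(V)$, so that $L^n(U)$ lies in the target subcategory. Let $N \subseteq L^n(U)$ be a nonzero admissible submodule. Since $N$ is graded with a lowest nontrivial piece, $N \cap \Omega_n(L^n(U)) \neq 0$, and its image in $\Omega_n/\Omega_{n-1}(L^n(U)) \cong U$ is an $A_n(V)$-submodule. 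Simplicity of $U$ leaves two cases: either the image is all of $U$, in which case $N$ contains a generating set of $L^n(U)$ and hence equals $L^n(U)$; or the image is $0$, in which case $N \subseteq \Omega_{n-1}(L^n(U))$, so the induced $A_n(V)$-action on $N$ factors through $A_{n-1}(V)$, and the universal property of $L^n$ combined with the hypothesis on $U$ forces $N = 0$.

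Direction 2: $L^n \circ \Omega_n/\Omega_{n-1} \cong \mathrm{id}$. Let $M = \bigoplus_{i \ge 0} M_i$ be a simple admissible $V$-module. Part (ii) of the preceding theorem gives $\Omega_n(M) = \bigoplus_{0 \le i \le n} M_i$, with the $M_i$ simple and pairwise inequivalent as $A_n(V)$-modules, so $\Omega_n/\Omega_{n-1}(M) \cong M_n$. Since the $A_{n-1}(V)$-modules arising from simple admissible $V$-modules via $\Omega_{n-1}/\Omega_{n-2}$ are exactly the $M_i$ with $i \le n-1$, the inequivalence statement forces $M_n$ not to factor through $A_{n-1}(V)$. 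The universal property of $L^n$ then produces an admissible $V$-module homomorphism $\phi: L^n(M_n) \to M$ extending the identification $M_n \cong \Omega_n/\Omega_{n-1}(L^n(M_n))$; both source and target are simple (the source by Direction 1), and $\phi$ is nonzero, so it must be an isomorphism.

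The main obstacle is the simplicity of $L^n(U)$ in Direction 1, which is precisely where the assumption that $U$ does not factor through $A_{n-1}(V)$ is unavoidable: otherwise a nontrivial admissible submodule concentrated in degrees $\le n-1$ could survive and render $L^n(U)$ reducible. Verifying this simplicity rigorously will require unwinding the inductive construction of $L^n$ as a quotient of a tensor-induced module, and checking that the relations being modded out force every nonzero admissible submodule to meet the $n$-th graded piece.
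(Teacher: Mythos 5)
This statement is quoted from Dong--Li--Mason \cite{DLM} as background; the paper gives no proof of it, so there is nothing internal to compare against. Judged on its own, your outline follows the standard strategy (reduce to simple objects by complete reducibility, then check the two compositions), but it has genuine gaps precisely where the content of the theorem lies. The first is the one you flag yourself: the simplicity of $L^n(U)$ for a simple $A_n(V)$-module $U$ not factoring through $A_{n-1}(V)$. This is not a routine verification but the technical heart of the argument in \cite{DLM}; it requires the explicit construction of $L^n(U)$ as the quotient of an induced module $\bar{M}_n(U)$ by its maximal graded submodule meeting the degree-$n$ piece trivially, and without that input your dichotomy does not close. In particular, ``the image is all of $U$, hence $N$ contains a generating set'' only produces elements of $N$ congruent to elements of $U$ modulo $\Omega_{n-1}(L^n(U))$, not a copy of $U$ inside $N$; and in the other branch, ``$N\subseteq\Omega_{n-1}$ forces $N=0$'' is asserted via an unspecified universal property rather than derived.

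The second gap is in Direction 2. From the pairwise inequivalence of $M_0,\dots,M_n$ as $A_n(V)$-modules you conclude that $M_n$ does not factor through $A_{n-1}(V)$; that inference is invalid. ``Does not factor through'' means the ideal $\ker\bigl(A_n(V)\twoheadrightarrow A_{n-1}(V)\bigr)$ acts nontrivially on $M_n$, and a module can fail to be isomorphic to any of $M_0,\dots,M_{n-1}$ while this kernel still acts by zero on it. What is needed is a direct argument that the kernel acts nontrivially on $M_n$ whenever $M_n\neq 0$, and the degenerate case $M_n=0$ (where $\Omega_n/\Omega_{n-1}(M)=0$ and the asserted inverse equivalence must be interpreted with care) is not addressed at all. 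As it stands the proposal is a reasonable plan for the proof in \cite{DLM}, not a proof.
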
 

\bigskip 
\section{The universal enveloping algebra  and its subquotients}

To define the universal enveloping algebra of a vertex operator algebra, we need to introduce a completion notation, as the Jacobi identity contains infinite sums. 

Let $U(\hat{V})$ be the universal enveloping algebra of the Lie algebra $\hat{V}$ that we constructed in Section 2. Then it inherits a natural $\Z$-grading from  $\hat{V}$, $U(\hat{V})=\bigoplus_{n\in \Z}U(\hat{V})_n$. The zero component $U(\hat{V})_0$ contains $U(\hat{V}(0))$, the universal enveloping algebra of $\hat{V}(0),$ as a subalgebra.  For $n\in \Z$ and $k\in \Z_{\leq 0}$, let 
$$U(\hat{V})^k_n=\sum_{i\leq k}U(\hat{V})_{n-i}U(\hat{V})_i \quad\mbox{~~and~~}\quad U(\hat{V}(0))^k=U(\hat{V}(0))\bigcap U(\hat{V})^k_0.$$
Then 
$$\cdots \subseteq U(\hat{V})_n^k\subseteq U(\hat{V})_n^{k+1}\subseteq \cdots \subseteq U(\hat{V})_n^0=U(\hat{V})_n$$
and 
$$\cdots \subseteq U(\hat{V}(0))^k\subseteq U(\hat{V}(0))^{k+1}\subseteq \cdots \subseteq U(\hat{V}(0))^0=U(\hat{V}(0))$$
are well-defined filtrations of $U(\hat{V})_n$ and $U(\hat{V}(0))$, respectively. Moreover, we have 
\begin{equation*}
\bigcap_{k}U(\hat{V})_n^k=0, \qquad \bigcup_{k}U(\hat{V})_n^k=U(\hat{V})_n.
\end{equation*}
Hence, the filtration $\{U(\hat{V})^k_n\}_{k\leq 0}$ forms a fundamental neighborhood system of $U(\hat{V})_n$. Let $\tilde{U}(\hat{V})_n$ be the completion of $U(\hat{V})_n$ with respect to this filtration, i.e., infinite sums are allowed in $\tilde{U}(\hat{V})_n$, and for any given $k$,  only finitely many terms are contained in $U(\hat{V})_n^k\setminus U(\hat{V})_n^{k+1}$. Let $\tilde{U}(\hat{V}(0))$ be the completion of $U(\hat{V}(0))$ with respect to the filtration $\{U(\hat{V}(0))^k\}_{k\leq 0}$, it is obviously a subspace of $\tilde{U}(\hat{V})_0$.

Let \begin{equation*}
\tilde{U}(\hat{V}):=\bigoplus_{n\in \Z}\tilde{U}(\hat{V})_n.
\end{equation*}
The space $\tilde{U}(\hat{V})$ becomes a $\Z$-graded topological ring with each component $\tilde{U}(\hat{V})_n$ being complete.  The subspace $U(\hat{V})$ is a dense subalgebra of $\tilde{U}(\hat{V})$ with $U(\hat{V})_n$ being dense in $\tilde{U}(\hat{V})_n$ for all $n$. The completion $\tilde{U}(\hat{V})$ is called a degreewise completed topological ring in the theory of quasi-finite algebras studied by A. Matsuo et al. in \cite{MNT}.

Denote by 
\begin{align*}
\langle \mbox{Vac}\rangle: \quad & \mathbf{1}(i)=\delta_{i, -1}, \mbox{~for all~} i\in \Z,\\
\langle\mbox{Vir}\rangle : \quad &[L(m), L(n)]=(m-n)L(m+n)+\delta_{m+n, 0}\dfrac{m^3-m}{12}\mathbf{c}, \mbox{~for all~} m ,n\in \Z,\\
J_{m, n, \ell}^{u, v}: \quad  &\sum_{i\geq 0}(-1)^i{\ell\choose i}\left(u(m+\ell-i)v(n+i)-(-1)^{\ell}v(n+\ell-i)u(m+i)\right)\\
&=\sum_{i\geq 0}{m\choose i}(u_{\ell+i}v)(m+n-i), \,\mbox{~for~} u, v\in V \mbox{~and~} m,n, \ell\in \Z.
\end{align*}
\begin{remark}
The element $L(n)$ should be considered as the image of $\omega\otimes t^{n+1}$ in $\hat{V}$. The Jacobi relation $J_{m, n, \ell}^{u, v}$ is now well-defined in $\tilde{U}(\hat{V})$.
\end{remark}  

\begin{definition}
The universal enveloping algebra  $U(V)$ of $V$ is the quotient of $\tilde{U}(\hat{V})$ by the relations: $\langle \mbox{Vac}\rangle, \langle\mbox{Vir}\rangle$ and $\langle J_{m, n, \ell}^{u, v}~|~ u, v\in V, m,n,\ell\in \Z \rangle$.
\end{definition}
\bigskip 
All the relations $\langle \mbox{Vac}\rangle, \langle\mbox{Vir}\rangle$ and $J_{m, n, \ell}^{u, v}$ are homogeneous, so the universal enveloping algebra $U(V)$ inherits a natural $\Z$-grading from $\tilde{U}(\hat{V})$. 

The image $U(V(0))$ of $\tilde{U}(\hat{V}(0))$ in $U(V)$ is obviously contained in  $U(V)_0$, which is a subalgebra of $U(V)$.

Let $$U(V)_0^k:=\sum_{i\leq k}U(V)_{-i}U(V)_i \quad \mbox{~and~} \quad U(V(0))^k:=U(V(0))\bigcap U(V)_0^k.$$ 
Then $\dfrac{U(V)_0}{U(V)_0^k}$ and $\dfrac{U(V(0))}{U(V(0))^k}$ inherit an associative algebra structure, as ${U(V)_0^k}$ and $U(V(0))^k$ are two-sided ideals of $U(V)_0$ and $U(V(0))$, respectively. By the obvious inclusions $U(V)_0^k\subseteq U(V)_0^{k+1}$ and $U(V(0))^k\subseteq U(V(0))^{k+1}$, we have two  inverse systems of algebras:
\begin{align}
&\frac{U(V)_0}{ U(V)_0^{-1}}\twoheadleftarrow \frac{U(V)_0}{ U(V)_0^{-2}}  \twoheadleftarrow \cdots \twoheadleftarrow   \frac{U(V)_0}{ U(V)_0^{-n}} \twoheadleftarrow \frac{U(V)_0}{U(V)_0^{-n-1}} \twoheadleftarrow \cdots,\\
&\frac{U(V(0))}{ U(V(0))^{-1}}\twoheadleftarrow \frac{U(V(0))}{ U(V(0))^{-2}} \twoheadleftarrow \cdots \twoheadleftarrow   \frac{U(V(0))}{ U(V(0))^{-n}} \twoheadleftarrow \frac{U(V(0))}{U(V(0))^{-n-1}}  \twoheadleftarrow \cdots .
\end{align}

Our goal is prove that these two inverse systems of associative algebras are both isomorphic to the inverse system given by higher level Zhu algebras (\ref{zhualgebrasystem}). More precisely, we are going to prove that 
$$A_n(V)\cong \frac{U(V)_0}{U(V)_0^{-n-1}}\cong\frac{U(V(0))}{ U(V(0))^{-n-1}}.$$

\bigskip
\section{The isomorphisms}
One of our motivations for this study is the paper \cite{FZ} of  I. Frenkel and Y. C. Zhu, where they mention a statement  that the Zhu algebra is isomorphic to a subquotient of the universal enveloping algebra. In this section, we prove that all higher level Zhu algebras are also isomorphic to subquotients of the universal enveloping algebra.

For simplicity, we use the following notation: For $u, v\in V \mbox{~and~} m, n, \ell\in \Z$, let
\begin{align*}
^{1}J_{m, n, \ell}^{u, v}:&=\sum_{i\geq 0}{m\choose i}(u_{\ell+i}v)(m+n-i),\\
^2J_{m, n, \ell}^{u, v}:&=\sum_{i\geq 0}(-1)^i{\ell\choose i}(u(m+\ell-i)v(n+i)-(-1)^{\ell}v(n+\ell-i)v(m+i)).
\end{align*}
They are just the two sides of the Jacobi identity $J_{m ,n ,\ell}^{u, v}$, so in the universal enveloping algebra, we have $^{1}J_{m, n, \ell}^{u, v}=\, ^2J_{m, n, \ell}^{u, v}$. 

We use the following notation, which is defined for homogeneous elements and extended linearly to the whole $V$.
$$J_n(u):=u(\Delta_u-1+n).$$
A good property for this notation is that the degree of $J_n(u)$ is always $-n$.

Let
\begin{align*}
^{(1)}J_{m, n, \ell}^{u, v}:&=\, ^{1}J_{m+\Delta_u-1, n+\Delta_v-1, \ell}^{u, v} \\
&= \sum_{i\geq 0}{m+\Delta_u-1\choose i}J_{m+n+\ell}(u_{\ell+i}v),\\
^{(2)}J_{m, n, \ell}^{u, v}:&=\, ^2J_{m+\Delta_u-1, n+\Delta_v-1, \ell}^{u, v}\\
&=\sum_{i\geq 0}(-1)^i{\ell\choose i}(J_{m+\ell-i}(u)J_{n+i}(v)
-(-1)^{\ell}J_{n+\ell-i}(v)J_{m+i}(u)).
\end{align*}
Every term in the expressions $^{(1)}J_{m, n, \ell}^{u, v}, \, ^{(2)}J_{m, n, \ell}^{u, v}$ is of the same degree $-m-n-\ell.$

The following lemma is very important in the proof of the main theorem. 
\begin{lemma}\label{surjectivelemma}
Every element $\sum J_{n_1}(u_1)\cdots J_{n_m}(u_m)$ in $\dfrac{U(V)_0}{U(V)_0^{-n}}$ can be expressed as $J_0(u(w))$ for some $u(w)\in V.$
\end{lemma}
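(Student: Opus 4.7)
The plan is to prove by a well-founded induction that every monomial $J_{n_1}(u_1)\cdots J_{n_m}(u_m) \in U(V)_0$ is congruent modulo $U(V)_0^{-n}$ to $J_0(w)$ for some $w \in V$; once this is established, summing over monomials and invoking the $\C$-linearity of $v \mapsto J_0(v)$ yields the lemma for arbitrary finite sums.

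For a monomial $M = J_{n_1}(u_1)\cdots J_{n_m}(u_m)$, membership in $U(V)_0$ forces $\sum_j n_j = 0$. I would induct on the complexity $c(M) = (m,\ \max(0,\, n - n_m))$ ordered lexicographically. The base case $m = 1$ forces $n_1 = 0$, so $M = J_0(u_1)$ is already in the desired form. If $n_m \geq n$, then $J_{n_m}(u_m) \in U(V)_{-n_m}$ with $-n_m \leq -n$, and the factorization $M = (J_{n_1}(u_1)\cdots J_{n_{m-1}}(u_{m-1})) \cdot J_{n_m}(u_m)$ places $M$ in $U(V)_{n_m}\, U(V)_{-n_m} \subseteq U(V)_0^{-n}$; so $M \equiv 0 = J_0(0)$.

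The substantive case is $m \geq 2$ and $n_m < n$. The key step is to apply the identity ${}^{(1)}J_{m', n', \ell}^{u_{m-1}, u_m} = {}^{(2)}J_{m', n', \ell}^{u_{m-1}, u_m}$ to the last two factors. With the choice $m' = 0$, $\ell = n_{m-1}$, $n' = n_m$ and solving for $J_{n_{m-1}}(u_{m-1}) J_{n_m}(u_m)$ using the $i=0$ term on the ${}^{(2)}J$ side, one expresses the pair as a linear combination of (i) single-mode terms of the form $J_{n_{m-1}+n_m}((u_{m-1})_{\ell+i}\, u_m)$, (ii) two-factor products $J_{n_{m-1}-i}(u_{m-1}) J_{n_m+i}(u_m)$ for $i \geq 1$ in which the rightmost index has been strictly increased to $n_m + i$, and (iii) two-factor products $J_{n_{m-1}+n_m-i}(u_m) J_i(u_{m-1})$ arising from the "swapped" summand. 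After multiplying on the left by the prefix $J_{n_1}(u_1)\cdots J_{n_{m-2}}(u_{m-2})$, the type (i) terms become monomials of length $m-1$ (handled by the outer induction hypothesis), and the type (ii) terms are monomials of length $m$ whose rightmost index $n_m + i$ strictly exceeds $n_m$, so the second coordinate of $c$ strictly decreases. The type (iii) terms are the delicate piece: one invokes the commutator formula (the $\ell = 0$ specialization of the Jacobi identity from Section 1) to reorder factors within $J_{n_{m-1}+n_m-i}(u_m) J_i(u_{m-1})$, rewriting it as a term of length $m-1$ plus a product whose rightmost index has strictly grown, thereby reducing $c$.

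The main obstacle will be the combinatorial bookkeeping required to verify that the interleaved applications of the iterate formula and the commutator formula actually reduce the complexity $c$ on every branch, so that the induction terminates. In particular, one must rule out infinite regress from type (iii) contributions whose rightmost index is small: the resolution is that each such term, after one further application of the commutator formula, decomposes into a length-reducing piece and a piece whose rightmost index is strictly larger, matching one of the other cases of the induction. Once the induction closes, every contribution reduces to a sum $\sum_k J_0(w_k)$, and by $\C$-linearity this equals $J_0\!\left(\sum_k w_k\right)$, producing the element $w \in V$ witnessing the conclusion.
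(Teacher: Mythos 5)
Your overall architecture (reduce to monomials, induct with length as the primary measure, contract the last two factors via the Jacobi identity, finish by linearity of $J_0$) matches the paper, and your degenerate cases ($m=1$; $n_m\geq n$ forces the monomial into $U(V)_0^{-n}$) are correct. But the substantive step has a genuine gap: the recursion you set up does not terminate, and the key device that makes it terminate in the paper is missing. Concretely, take $\ell=n_{m-1}\geq 0$ in your application of ${}^{(1)}J={}^{(2)}J$ with $m'=0$, $n'=n_m$. The swapped sum then contains the $i=n_{m-1}$ term $-J_{n_m}(u_m)J_{n_{m-1}}(u_{m-1})$, and your prescribed commutator reordering turns this back into $J_{n_{m-1}}(u_{m-1})J_{n_m}(u_m)$ --- the very product you were solving for --- with coefficient $+1$ plus length-$(m-1)$ terms; the identity collapses to a tautology and gives no reduction (for $n_{m-1}=0$ this is just the statement that $J_0(u)J_{n_m}(v)=J_{n_m}(v)J_0(u)+[\,\cdot\,,\cdot\,]$, which loops). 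For $n_{m-1}<0$ the surviving swapped terms $J_{n_{m-1}+n_m-i}(u_m)J_i(u_{m-1})$ with $0\leq i<n$ reorder to products whose rightmost index is $n_{m-1}+n_m-i\leq n_m+n_{m-1}<n_m$, so your complexity $c$ strictly \emph{increases}; the claim that the rightmost index ``has strictly grown'' after one more commutator is false, since the commutator formula only transposes the two factors without changing their indices.

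The paper closes exactly this hole with Lemma \ref{Key Lemma} and Corollary \ref{keycoro}: instead of a single Jacobi identity, one takes the linear combination $\sum_{j=0}^{N}{-N-s-1\choose j}\,{}^{(2)}J^{u,v}_{N+1,\,t+j,\,-N-s-1-j}$, in which the identity $\sum_{j=0}^{k}(-1)^j{k\choose j}=0$ cancels every two-factor term except $J_{-s}(u)J_t(v)$ itself and terms whose rightmost factor is $J_{k+t}(v)$ with $k\geq N+1$ or $J_{N+1+i}(u)$ with $i\geq 0$. Choosing $N$ with $\min\{N+n_m,N\}>n$ places all of these corrections in $U(V)_0^{-n}$ at once, so no secondary induction on indices is needed and one inducts only on the number of factors. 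A repair of your argument along single-identity lines is possible --- applying one Jacobi identity with $\ell$ chosen sufficiently negative (rather than $\ell=n_{m-1}$) makes every surviving correction term have strictly larger rightmost index, so your lexicographic $c$ would then decrease --- but as written, with $\ell=n_{m-1}$ and the commutator-formula patch for type (iii), the induction does not close.
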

\begin{proof}
We only need to prove the claim for  monomials $w=J_{n_1}(u_1)\cdots J_{n_m}(u_m)$. 
Define the degree of $w$ to be $m$, i.e., the number of factors of it. Then a degree one element in $U(V)_0$ is just an element of the form $J_0(u)$ for some $u\in V$, and we need to show that every monomial in the quotient $\dfrac{U(V)_0}{U(V)_0^{-n}}$ is congruent to a degree one element.

We use induction on $m$, the degree of the monomial $w$. If $m=1$, there is nothing we need to do. Let $m=k\geq 2$ and assume that for every monomial of degree less than $k$, it is congruent to a degree one element in the quotient $\dfrac{U(V)_0}{U(V)_0^{-n}}$.

Use the formula in Corollary \ref{keycoro} for $J_{n_{m-1}}(u_{m-1})J_{n_m}(u_m)$, where $$-s=n_{m-1},\, t=n_m, \, u=u_{m-1},\, v=u_m.$$ 
In Corollary \ref{keycoro}, choose $N$ big enough, so that $\min\{N+n_m, N\}>n$. Then $J_{k+n_m}(u_m)$ and $J_{N+1+i}(u_{m-1})$ are both contained in $\bigoplus_{j\leq -n}U(V)_{j}$ for $k\geq N+1$, and $w=J_{n_1}(u_1)\cdots J_{n_m}(u_m)$ is congruent to a linear combination of the following lower degree monomials:
$$J_{n_1}(u_1)\cdots J_{n_{m-2}}(u_{m-2})J_{n_m+n_{m-1}}((u_{m-1})_{-N+n_{m-1}-i-j-1}u_m).$$

By induction, these lower degree monomials are congruent to degree one monomials, so $w$ is itself congruent to a degree one monomial.  
\end{proof}

Now we are in a position to prove the isomorphisms between higher level Zhu algebras and subquotients of the universal enveloping algebra. 

\begin{theorem}
We have the isomorphism for $n\geq 0$, 
\begin{equation}\label{Isom}
A_n(V)\cong \dfrac{U(V)_0}{U(V)_0^{-n-1}}.
\end{equation}

\end{theorem}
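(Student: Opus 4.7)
The plan is to realize the isomorphism (\ref{Isom}) as the map
$$\bar\phi: A_n(V) \longrightarrow U(V)_0/U(V)_0^{-n-1}$$
given on homogeneous elements by $v + O_n(V) \mapsto J_0(v) + U(V)_0^{-n-1}$ and extended linearly, verifying well-definedness, multiplicativity, surjectivity, and injectivity in turn.

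For well-definedness I must show $J_0$ sends the two families of generators of $O_n(V)$, namely $u\circ_n v$ and $L(-1)u+L(0)u$, into $U(V)_0^{-n-1}$. The key tool is the Jacobi identity in $U(V)$ specialized to $m=0$,
$${}^{(1)}J^{u,v}_{0,n',\ell} = {}^{(2)}J^{u,v}_{0,n',\ell},$$
which collapses the left side to the single term $J_0(u_\ell v) = (u_\ell v)(n')$ and equates it to a (possibly infinite) sum of products $J_a(u)J_b(v)$ and $J_c(v)J_d(u)$. Applying this with $\ell=i-2n-2$ to each summand of $u\circ_n v$, the outer binomial $\binom{\Delta_u+n}{i}$ confines $i$ to $\{0,\ldots,\Delta_u+n\}$, and a bookkeeping computation on the right-factor indices shows that every resulting term has its right factor of grading $\le -(n+1)$, hence lies in $U(V)_0^{-n-1}$. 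The generator $L(-1)u+L(0)u = u\circ \mathbf{1}$ reduces to the $n=0$ case of the same computation. Multiplicativity $\bar\phi(u*_n v) = \bar\phi(u)\bar\phi(v)$ proceeds analogously: expand $J_0(u*_n v)$ using the definition of $*_n$ and invoke the Jacobi identity on each $J_0(u_{i-m-n-1}v)$; the specific combinatorial shape of $*_n$ is engineered so that every resulting term cancels modulo $U(V)_0^{-n-1}$ except the single term $J_0(u)J_0(v) = u(\Delta_u-1)v(\Delta_v-1)$. The unit $\mathbf{1}+O_n(V)$ maps to $J_0(\mathbf{1}) = \mathbf{1}(-1) = \Id$ by the vacuum relation $\langle\mbox{Vac}\rangle$.

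Surjectivity of $\bar\phi$ follows immediately from Lemma \ref{surjectivelemma} applied at level $n+1$: every element of $U(V)_0/U(V)_0^{-n-1}$ equals $J_0(w)$ for some $w\in V$, which is $\bar\phi(w+O_n(V))$. For injectivity I invoke the DLM functor $L^n$. Take $U := A_n(V)$ as a left module over itself (the regular representation is faithful) and form the admissible $V$-module $M := L^n(U)$. By the DLM result, $\Omega_n/\Omega_{n-1}(M) \cong A_n(V)$ as $A_n(V)$-modules, with the action $v+O_n(V)$ realized as $o^M(v) = J_0(v)$ under the inclusion of $A_n(V)$ into $M$ at grade $n$. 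Now suppose $\bar\phi(v+O_n(V))=0$, so $J_0(v) = \sum_\alpha x_\alpha y_\alpha$ with $\deg y_\alpha \le -(n+1)$. Each $y_\alpha$, as an operator on $M$, lowers grading by at least $n+1$ and so annihilates $M_n$; consequently so does $J_0(v)$. Evaluating the induced action on the unit $1_{A_n(V)}$ gives $v+O_n(V) = 0$, proving injectivity.

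The main obstacle is the combinatorial verification in the first step, where the specific binomial identities defining $\circ_n$ and $*_n$ must be matched term-by-term against the Jacobi identity of $U(V)$; the infinite sums are tame thanks to the completion in $U(V)$, but isolating precisely the terms that survive modulo $U(V)_0^{-n-1}$ demands careful bookkeeping, and it is here that the delicate design of the formulas for $\circ_n$ and $*_n$ is essential.
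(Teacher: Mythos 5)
Your overall architecture coincides with the paper's: the map $v+O_n(V)\mapsto J_0(v)+U(V)_0^{-n-1}$, surjectivity from Lemma \ref{surjectivelemma}, and injectivity by letting a representative of an element of $U(V)_0^{-n-1}$ act on $L^n(A_n(V))$ and evaluating on the identity of the regular module. The gap is in the computational core, namely the verification that $J_0(u\circ_n v)\in U(V)_0^{-n-1}$ and that $J_0(u*_nv)\equiv J_0(u)J_0(v)$. You propose to apply the $m=0$ Jacobi identity (the iterate formula) to each summand $u_\ell v$ with $\ell=i-2n-2$, and claim that ``every resulting term has its right factor of grading $\le -(n+1)$.'' That is false: the second family of terms in ${}^2J^{u,v}_{0,n',\ell}$ consists of products $v(n'+\ell-j)u(j)$ with $j\ge 0$, and the right factor $u(j)$ has degree $\Delta_u-j-1$, which is strictly greater than $-n-1$ for every $j<\Delta_u+n$. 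So the term-by-term degree bookkeeping does not close, and whatever cancellation forces the total into $U(V)_0^{-n-1}$ is exactly what remains to be proved. The paper avoids this by using a \emph{single} Jacobi identity with both of the first two indices pushed up symmetrically: $\varphi(u\circ_nv)={}^{(1)}J^{u,v}_{n+1,n+1,-2n-2}={}^{(2)}J^{u,v}_{n+1,n+1,-2n-2}$, in which the right factors of \emph{both} families are $J_{n+1+i}(v)$ and $J_{n+1+i}(u)$, all of degree $\le -n-1$. This is the actual role of the kernel $(1+z)^{\Delta_u+n}/z^{2n+2}$ in the definition of $\circ_n$: the binomial $\binom{\Delta_u+n}{i}$ is the coefficient $\binom{m}{i}$ of the Jacobi identity at $m=\Delta_u+n$, i.e. the whole sum defining $u\circ_n v$ is one side of one Jacobi relation, not an outer sum to which one applies $m=0$ relations one at a time.

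The same problem is magnified in the multiplicativity step: no single Jacobi identity converts $J_0(u)J_0(v)$ into $J_0(u*_nv)$ modulo $U(V)_0^{-n-1}$; the paper needs the specific linear combination $\sum_{j=0}^{N}\binom{-N-s-1}{j}\,{}^{(2)}J^{u,v}_{N+1,t+j,-N-s-1-j}$ of Lemma \ref{Key Lemma}, whose internal binomial cancellation (the vanishing of the terms with $1\le k\le N$) is the real combinatorial content, packaged as Corollary \ref{keycoro}. Your sentence that ``the specific combinatorial shape of $*_n$ is engineered so that every resulting term cancels'' asserts precisely this lemma without supplying it. A smaller but genuine error: reducing $L(-1)u+L(0)u=u\circ\mathbf{1}$ ``to the $n=0$ case'' would at best place its image in $U(V)_0^{-1}$, which does not give membership in $U(V)_0^{-n-1}$ for $n\ge 1$; what is actually true, and what the paper uses, is that $J_0(L(-1)u+L(0)u)=0$ on the nose, since $(L(-1)u)(m)=-m\,u(m-1)$ already in $\hat V$. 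The surjectivity and injectivity portions of your argument are fine and are essentially identical to the paper's.
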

\begin{proof}
Let $\varphi$ be the map from $V$ to $U(V)_0$ sending $v$ to $o(v)$, where $o(v)$ is the image of $v(\Delta_v-1)$ in $U(V)$ for homogeneous $v$ and extended linearly to $V$. Combine it with the canonical quotient map from  $U(V)_0$ to $\dfrac{U(V)_0}{U(V)_0^{-n-1}}$. Then Lemma \ref{surjectivelemma} tells us that this map is surjective.

First of all, we show that $\varphi$ factors through $A_n(V)$, i.e., $\varphi(O_n(V))\subset U(V)_0^{-n-1}$.

Recall that
$O_n(V)=\Span\{u\circ_n v, L(-1)u+L(0)u~|~ u, v\in V, u \mbox{~homogeneous}\},$
where 
\begin{align*}
u\circ_n v&=\sum_{i=0}^{\Delta_u+n}{\Delta_u+n\choose i}u_{i-2n-2}v.
\end{align*}
As $\varphi(L(-1)u+L(0)u)\equiv 0$, we only need to prove that 
$\varphi(u\circ v)\in U(V)_0^{-n-1}.$
Assume that $u, v$ are both homogeneous. Then $\Delta_{u_{i-2n-2}v}=\Delta_u+\Delta_v+2n+1+i,$ and 
\begin{align*}
\varphi(u\circ v)&=\sum_{i=0}^{\Delta_u+n}{\Delta_u+n\choose i}(u_{i-2n-2}v)(\Delta_u+\Delta_v+2n+i)\\
&=\, ^{(1)}J_{n+1, n+1, -2n-2}^{u, v} \\
&= \, ^{(2)}J_{n+1, n+1, -2n-2}^{u, v}\\
&=\sum_{i\geq 0}(-1)^i{-2n-2\choose i}J_{-n-1-i}(u)J_{n+1+i}(v)\\
&\qquad-\sum_{i\geq 0}(-1)^{i}{-2n-2\choose i}J_{-n-1-i}(v)J_{n+1+i}(u).
\end{align*}

As $\deg\,J_{n+1+i}(v)=\deg\,J_{n+1+i}(u)\leq -n-1,$ so $\varphi(u\circ v)\in U(V)_0^{-n-1}$.

Next we prove that $\varphi$ is an algebra homomorphism, i.e., $\varphi(u*_nv)=\varphi(u)\varphi(v)$.

Recall that 
\begin{align*}
u*_nv&=\sum_{m=0}^n\sum_{i=0}^{\infty}(-1)^m
{m+n\choose n}{\Delta_u+n\choose i}u_{i-m-n-1}v.
\end{align*}
So 
\begin{align*}
\varphi&(u*_nv)
\\
&=\sum_{m=0}^n\sum_{i=0}^{\infty}(-1)^m
{m+n\choose n}{\Delta_u+n\choose i}(u_{i-m-n-1}v)(\Delta_u+\Delta_v+m+n-i).
\end{align*}
By letting $s=t=0$ and $N=n$ in Corollary \ref{keycoro}, we have 
\begin{align*}
J_{0}(u)J_0(v)&\equiv \sum_{j=0}^{n}{-n-1\choose j}J^{(1)}_{n+1, j, -n-1-j}(u, v) \quad \mbox{~mod~} U(V)_0^{-n-1}\\
&\equiv\sum_{j=0}^{n}\sum_{i\geq 0}{-n-1\choose j}{n+\Delta_u\choose i}J_0(u_{-n-1+i-j}) \quad \mbox{~mod~} U(V)_0^{-n-1}\\
&\equiv \sum_{j=0}^n\sum_{i=0}^{\infty}(-1)^j
{j+n\choose j}{\Delta_u+n\choose i}J_0(u_{i-j-n-1}v) \quad \mbox{~mod~} U(V)_0^{-n-1}
\end{align*}
that is, $\varphi(u*_nv)=\varphi(u)\varphi(v)$.

Finally, we want to construct an inverse map for $\varphi$. Every element of $\dfrac{U(V)_0}{U(V)_0^{-n-1}}$ can be expressed as $J_0(u)+U(V)_0^{-n-1}$. We want to define the map $\varphi^{-1}$ from $\dfrac{U(V)_0}{U(V)_0^{-n-1}}$ to $A_n(V)$ sending $J_0(u)+U(V)_0^{-n-1}$ to $u+O_n(V)$. Once we prove that this is a well-defined map, it is an inverse for $\varphi.$ 
The well-definedness requires that whenever $J_0(u)\in U(V)_0^{-n-1}$, we have $u\in O_n(V)$. Then $\varphi^{-1}$ does not depend on the representatives of an element of $\dfrac{U(V)_0}{U(V)_0^{-n-1}}$ and really gives a map. Consider the functor $L^n$ constructed by Dong, Li and Mason in \cite{DLM}, and its action on the left regular module $A_n(V)$.  If $J_0(u)\in U(V)_0^{-n-1}$, $J_0(u)$ will kill the subspace $\Omega_n(L^n(A_n(V)))$ by definition. In particular, it will kill the quotient $\Omega_n/\Omega_{n-1}(L^n(A_n(V)))$, which is isomorphic to $A_n(V)$ as $A_n(V)$-modules. So $J_0(u)v=u*_nv$ for all $v\in V$, and in particular, for $v={\mathbf 1}$, which is the identity element of $A_n(V)$. So we have $u*_n{\mathbf 1}=J_0(u){\mathbf 1}=0$, which implies that $u\in O_n(V)$.
\end{proof}

\begin{cor}
The Zhu algebra is isomorphic to a subquotient of the universal enveloping algebra.  
$$\Zhu(V)=A_0(V)\cong \dfrac{U(V)_0}{U(V)_0^{-1}}.$$
\end{cor}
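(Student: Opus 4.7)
The plan is immediate: this Corollary is exactly the $n=0$ specialization of the preceding Theorem, so I would simply set $n=0$ there and identify $A_0(V)$ with the Zhu algebra. The identification $A_0(V) = \Zhu(V)$ is already recorded in the Remark following the definition of $A_n(V)$: when $n=0$, the relation $L(-1)u + L(0)u = u \circ \mathbf{1}$ makes the extra ``Virasoro'' generators of $O_0(V)$ redundant, the product $\circ_0$ coincides with $\circ$, and $*_0$ coincides with $*$, so that $O_0(V) = O(V)$ and hence $A_0(V) = V/O(V) = \Zhu(V)$.

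Substituting $n=0$ into the isomorphism
\begin{equation*}
A_n(V) \cong \frac{U(V)_0}{U(V)_0^{-n-1}}
\end{equation*}
then yields $A_0(V) \cong U(V)_0 / U(V)_0^{-1}$, and combining this with the identification above produces the Corollary.

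There is no real obstacle here, as all the substantive work has already been absorbed into Lemma \ref{surjectivelemma} and into the proof of the Theorem (well-definedness of $\varphi$ on the quotient, multiplicativity, and the construction of an inverse via the Dong--Li--Mason functor $L^n$). The Corollary is stated separately only to emphasize that the $n=0$ case recovers the statement of Frenkel and Zhu in \cite{FZ} that motivated this paper, and to record explicitly that the Zhu algebra is the quotient of $U(V)_0$ by the two-sided ideal $U(V)_0^{-1} = \sum_{i \leq -1} U(V)_{-i} U(V)_i$ generated by products in which the rightmost factor has strictly negative degree.
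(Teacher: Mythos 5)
Your proposal is correct and matches the paper, which offers no separate proof of this corollary precisely because it is the immediate $n=0$ specialization of the preceding theorem combined with the remark that $O_0(V)=O(V)$ and $*_0=*$, hence $A_0(V)=\Zhu(V)$. Nothing further is needed.
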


Recall that there is a surjective Lie algebra homomorphism from $\hat{V}(0)$ to $A_n(V)$, which induces a surjective associative algebra homomorphism from $U(\hat{V}(0))$ to $A_n(V)$. Combine it with the isomorphism (\ref{Isom}), we can conclude that $U(\hat{V}(0))$ is a dense subalgebra of $U(V)_0.$

\begin{cor}
The subalgebra $U(\hat{V}(0))$ is dense in $U(V)_0$, i.e., $U(\hat{V}(0))+U(V)_0^{-n}=U(V)_0 \mbox{~for all~} n\leq 0.$ So we have the isomorphisms:  $$ \frac{U(V)_0}{U(V)_0^{-n-1}}\cong\frac{U(V(0))}{ U(V(0))^{-n-1}},  \mbox{~for $n\geq 0$}.$$
\end{cor}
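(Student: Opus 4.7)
The plan is to leverage the isomorphism $A_n(V)\cong U(V)_0/U(V)_0^{-n-1}$ established in the preceding theorem together with the surjective Lie algebra map $\hat{V}(0)\twoheadrightarrow A_n(V)$ recalled from \cite{DLM}. By the universal property of $U(\hat{V}(0))$, the latter extends to a surjective associative algebra homomorphism $U(\hat{V}(0))\twoheadrightarrow A_n(V)$, and composing with the theorem's isomorphism yields a surjection
\[
U(\hat{V}(0))\twoheadrightarrow U(V)_0/U(V)_0^{-n-1}
\]
for every $n\geq 0$.

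The key verification is that this composition coincides with the restriction to $U(\hat{V}(0))$ of the canonical quotient map $U(V)_0\twoheadrightarrow U(V)_0/U(V)_0^{-n-1}$. Both maps send the Lie generator $o(v)=v(\Delta_v-1)\in\hat{V}(0)$ to the class of $J_0(v)$: along the first route, $o(v)\mapsto v+O_n(V)\mapsto J_0(v)+U(V)_0^{-n-1}$ by the explicit description of the isomorphism $\varphi$ in the main theorem; along the second route, $o(v)$ is by definition the element $J_0(v)$ of $U(V)_0$. Since $U(\hat{V}(0))$ is generated as an algebra by such elements, the two algebra homomorphisms agree.

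Surjectivity of the composition then translates directly into the density statement $U(\hat{V}(0))+U(V)_0^{-n-1}=U(V)_0$ for every $n\geq 0$, and passing to images in $U(V)$ gives the same equality with $U(V(0))$ in place of $U(\hat{V}(0))$. The second displayed isomorphism is then formal: the inclusion $U(V(0))\hookrightarrow U(V)_0$ induces a map $U(V(0))/U(V(0))^{-n-1}\to U(V)_0/U(V)_0^{-n-1}$, which is injective by the defining equation $U(V(0))^{-n-1}=U(V(0))\cap U(V)_0^{-n-1}$ and surjective by the density just proved.

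I anticipate no serious obstacle; the only step requiring genuine attention is the compatibility check in the second paragraph, namely that the isomorphism produced by the theorem, the universal-property map out of $U(\hat{V}(0))$, and the tautological inclusion $U(\hat{V}(0))\hookrightarrow U(V)_0$ all agree on the generators $o(v)$. Once that diagram chase is recorded, both the density statement and the quotient identification follow at once.
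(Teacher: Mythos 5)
Your proposal is correct and follows exactly the paper's route: the paper likewise obtains the surjection $U(\hat{V}(0))\twoheadrightarrow A_n(V)\cong U(V)_0/U(V)_0^{-n-1}$ from the Lie algebra surjection of \cite{DLM} and the main theorem, and reads off density and the quotient isomorphism from that. You simply make explicit the generator-level compatibility check and the injectivity coming from $U(V(0))^{-n-1}=U(V(0))\cap U(V)_0^{-n-1}$, which the paper leaves implicit.
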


Let $C_{2}V:=\Span \{u_{-2}v~|~ u, v\in V \}$. A vertex operator algebra is called $C_2$-cofinite if $\dim \dfrac{V}{C_2V}<\infty.$ In  \cite{MNT}, the authors proved that if a vertex operator algebra is $C_2$-cofinite, then all the subquotients $\dfrac{U(V)_0}{U(V)_0^{-n-1}}$ are finite dimensional. With the isomorphisms between $A_n(V)$ and these subquotients, we easily get the corollary below. 

\begin{cor}
If $V$ is a $C_2$-cofinite vertex operator algebra, then all higher level Zhu algebras are finite dimensional. 
\end{cor}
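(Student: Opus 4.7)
The plan is to obtain the statement as a direct consequence of the main theorem of Section~3 together with a known finite-dimensionality result for subquotients of the universal enveloping algebra. Concretely, for each $n\geq 0$ the main theorem provides a $\C$-algebra isomorphism $A_n(V)\cong U(V)_0/U(V)_0^{-n-1}$, so it suffices to show that the right-hand side is finite-dimensional whenever $V$ is $C_2$-cofinite.

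First, I would invoke the theorem of Matsuo--Nagatomo--Tsuchiya cited in the paragraph preceding the corollary: in the $C_2$-cofinite setting, every subquotient $U(V)_0/U(V)_0^{-n-1}$ has finite dimension over $\C$. This is the deep input, and I would not attempt to reprove it; I would simply quote it from \cite{MNT}. The role of $C_2$-cofiniteness there is to control, via a spanning set built from $V/C_2V$, the image in the quotient of arbitrarily long monomials $J_{n_1}(u_1)\cdots J_{n_m}(u_m)$ of total degree zero whose rightmost factors lie in sufficiently negative degrees.

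Second, transporting finite-dimensionality across the isomorphism from the main theorem immediately yields $\dim_\C A_n(V)<\infty$ for every $n\geq 0$, which is the desired statement. Since $n$ is arbitrary, all the higher level Zhu algebras are finite-dimensional simultaneously, with bounds inherited from the MNT bounds on the subquotients.

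There is no real obstacle at this stage: the substantive work has already been done in establishing the isomorphism $A_n(V)\cong U(V)_0/U(V)_0^{-n-1}$ earlier in the section, and the $C_2$-cofinite finiteness of the subquotients is taken as an external input. The only thing one might pause to verify is that the isomorphism is genuinely $\C$-linear (hence preserves dimension), but this is built into its construction via the map $\varphi$.
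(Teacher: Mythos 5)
Your proposal matches the paper's argument exactly: the paper likewise quotes the Matsuo--Nagatomo--Tsuchiya result that $C_2$-cofiniteness forces each subquotient $U(V)_0/U(V)_0^{-n-1}$ to be finite dimensional, and then transports this across the isomorphism $A_n(V)\cong U(V)_0/U(V)_0^{-n-1}$ of the main theorem. No gap; this is correct and is the same route the paper takes.
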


\bigskip
\emph{Acknowledgements:} The author would like to express his thanks to Professor Atsushi Matsuo and Professor Yongchang Zhu for their kind communications. The author would also like to thank China Scholarship Council (File No. 201304910374) and l'Institut des sciences math\'ematiques for their financial support.

\bigskip 
\appendix
\section{}
In this appendix, we prove a combinatorial identity and one of its corollary, which we used in the proof of the main theorem.

For a negative integer $n$ and a positive integer $k$,  recall that 
\begin{equation}\label{negativen}
{n\choose k}=\dfrac{n(n-1)\cdots (n-k+1)}{k!}=(-1)^k{-n+k-1\choose k}.
\end{equation}

Recall the notation in Section 3.  The statement of the following lemma was suggested by Atsushi Matsuo.
\begin{lemma}\label{Key Lemma}
For any integers $s, t$ and $N$ satisfying $N+s\geq 0$, 
\begin{align*}
X &:=\sum_{j=0}^{N}{-N-s-1\choose j}\,^{(2)}J_{N+1, t+j, -N-s-1-j}^{u, v} \nonumber \\
&=J_{-s}(u)J_t(v)+\sum_{k\geq N+1}\sum_{j=0}^N(-1)^{j}{N+s+j\choose j}{N+s-k\choose k-j}J_{-k-s}(u)J_{k+t}(v)\\
&\quad\, -\sum_{j=0}^N\sum_{i\geq 0}(-1)^{N+s+1}{N+s+j\choose j}{N+s+j+i\choose i}J_{t-N-s-1-i}(v)J_{N+1+i}(u). 
\end{align*}
\end{lemma}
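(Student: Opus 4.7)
The proof will be by expansion and term-by-term matching. First I would unfold each ${}^{(2)}J^{u,v}_{N+1,\,t+j,\,-N-s-1-j}$ via its defining formula and split $X$ into an ``$uv$-part'' consisting of monomials $J_p(u)J_q(v)$ and a ``$vu$-part'' consisting of monomials $J_p(v)J_q(u)$. These correspond to the two distinct sums on the right-hand side of the claim and can be handled independently.

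The $vu$-part is essentially immediate, since the operator product $J_{t-N-s-1-i}(v)J_{N+1+i}(u)$ carries no $j$-dependence; the outer $j$-sum collapses into a product of binomial coefficients that, after one application of the upper-negation identity (\ref{negativen}), matches the last line of the claim. For the $uv$-part I would reindex by $k=j+i$, so that each summand becomes a multiple of $J_{-s-k}(u)J_{t+k}(v)$ with coefficient
\begin{equation*}
C_k \;=\; \sum_{j=0}^{\min(N,k)}\binom{-N-s-1}{j}(-1)^{k-j}\binom{-N-s-1-j}{k-j}.
\end{equation*}
The key step is to apply the absorption rule $\binom{a}{j}\binom{a-j}{k-j}=\binom{a}{k}\binom{k}{j}$ with $a=-N-s-1$, which factors $C_k$ as
\begin{equation*}
C_k \;=\; (-1)^k\binom{-N-s-1}{k}\sum_{j=0}^{\min(N,k)}(-1)^j\binom{k}{j}.
\end{equation*}
For $0\le k\le N$ the inner alternating sum is complete and equals $\delta_{k,0}$, which produces precisely the leading term $J_{-s}(u)J_t(v)$. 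For $k\ge N+1$ it is truncated and evaluates to $(-1)^N\binom{k-1}{N}$, and a final application of (\ref{negativen}) recasts the resulting coefficient in the form demanded by the middle sum on the right-hand side.

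The only real obstacle is careful sign bookkeeping: the signs $(-1)^i$, $(-1)^{k-j}$ and $(-1)^{-N-s-1-j}$ enter at different stages of the expansion, and each conversion between $\binom{-n}{r}$ and $(-1)^r\binom{n+r-1}{r}$ must be performed at the right moment in order for the final coefficients to land in the stated normalization (in particular the sign of the $vu$-part and the binomial shape of the $uv$-part). The combinatorial content itself, however, consists of only two ingredients -- the absorption identity $\binom{a}{j}\binom{a-j}{k-j}=\binom{a}{k}\binom{k}{j}$ together with the partial alternating-sum formula $\sum_{j=0}^N(-1)^j\binom{k}{j}=(-1)^N\binom{k-1}{N}$ -- so once these are brought in the verification becomes essentially formal, and the hypothesis $N+s\ge 0$ only enters to ensure that the upper negations employed along the way go through without boundary issues.
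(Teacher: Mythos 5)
Your proposal is correct and follows essentially the same route as the paper's own proof: expand ${}^{(2)}J^{u,v}_{N+1,\,t+j,\,-N-s-1-j}$, separate the $uv$- and $vu$-parts, substitute $k=i+j$, and annihilate the range $1\le k\le N$ via the complete alternating sum $\sum_{j=0}^{k}(-1)^j{k\choose j}=0$, your only addition being the closed-form evaluation $(-1)^N{k-1\choose N}$ of the truncated $j$-sum for $k\ge N+1$, which the paper simply leaves as a double sum. One caveat: the coefficient you obtain for $k\ge N+1$, namely $(-1)^N{N+s+k\choose k}{k-1\choose N}=\sum_{j=0}^{N}(-1)^j{N+s+j\choose j}{N+s+k\choose k-j}$, agrees with what the paper's proof actually derives for that range of $C$, but not with the binomial ${N+s-k\choose k-j}$ printed in the lemma statement, which appears to be a sign typo (test $N=s=0$, $k=1$: the true coefficient is $1$ while ${-1\choose 1}=-1$), so your final ``recasting'' step should be read against the corrected statement.
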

\begin{proof}
By definition, $^{(2)}J_{N+1, t+j, -N-s-1-j}^{u, v}=A-B$, where
\begin{align*}
A=&\sum_{i\geq 0}(-1)^i{-N-s-1-j\choose i}J_{-s-j-i}(u)J_{t+j+i}(v),\\
B=&\sum_{i\geq 0}(-1)^{-N-s-1-j+i}{-N-s-1-j\choose i}J_{t-N-s-1-i}(v)J_{N+1+i}(u).
\end{align*}
So $X=C-D$, where
\begin{align*}
C&=\sum_{j=0}^{N}{-N-s-1\choose j}A\\
 &=\sum_{j=0}^{N}
\sum_{i\geq 0}(-1)^j{N+s+j\choose j} {N+s+j+i\choose i}J_{-s-j-i}(u)J_{t+j+i}(v),\\
D&=\sum_{j=0}^{N}{-N-s-1\choose j}B\\
&=\sum_{j=0}^{N}\sum_{i\geq 0}(-1)^{N+s+1}{N+s+j\choose j}{N+s+i+j\choose i}J_{t-N-s-1-i}(v)J_{N+1+i}(u).
\end{align*}
We used the formula (\ref{negativen}) in the above calculation.

Let $k=i+j$ in the expression of $C$, then
\begin{align}
C &=\sum_{j=0}^{N}
\sum_{k\geq j}(-1)^{j}{N+s+j\choose j} {N+s+k\choose k-j}J_{-s-k}(u)J_{k+t}(v)\nonumber\\
&=\sum_{k=0}^N\sum_{j=0}^{k}(-1)^{j}{N+s+j\choose j} {N+s+k\choose k-j}J_{-s-k}(u)J_{k+t}(v)\label{X6}\\
&\qquad+\sum_{k\geq N+1}\sum_{j=0}^N(-1)^{j}{N+s+j\choose j} {N+s+k\choose k-j}J_{-s-k}(u)J_{k+t}(v). \label{X7}
\end{align}
In the expression (\ref{X6}), for $1\leq k\leq N$, we have
\begin{align*}
\sum_{j=0}^{k}&(-1)^{j}{N+s+j\choose j} {N+s+k\choose k-j}J_{-s-k}(u)J_{k+t}(v)\\
&=\sum_{j=0}^{k}
(-1)^{j}\dfrac{(N+s+j)!}{j!(N+s)!} \dfrac{(N+s+k)!}{(k-j)!(N+s+j)!}J_{-s-k}(u)J_{k+t}(v)\\
&=\sum_{j=0}^{k}
(-1)^{j}\dfrac{(N+s+k)!}{(N+s)!k!} \dfrac{k!}{(k-j)!j!}J_{-s-k}(u)J_{k+t}(v)\\
&={N+s+k\choose k}\sum_{j=0}^{k}(-1)^{j}{k\choose j}J_{-s-k}(u)J_{k+t}(v)\\
&=0,
\end{align*}
and for $k=0$, we will have $j=0$, so only one term will left in (\ref{X6}), which is $J_{-s}(u)J_t(b)$.
\end{proof}

\begin{cor}\label{keycoro}
In the universal enveloping algebra $U(V)$, for any integers $s, t$ and $N$ satisfying $N+s\geq 0$, we have the identity
\begin{align*}
&J_{-s}(u)J_t(v)\\
&=\sum_{j=0}^{N}\sum_{i\geq 0}(-1)^i{N+\Delta_u\choose i}{-N-s-1\choose j}J_{t-s}(u_{-N-s-i-j-1}v)\\
&\qquad-\sum_{k\geq N+1}\sum_{j=0}^N(-1)^{j}{N+s+j\choose j}{N+s-k\choose k-j}J_{-k-s}(u)J_{k+t}(v)\\
&\qquad+\sum_{j=0}^N\sum_{i\geq 0}(-1)^{N+s+1}{N+s+j\choose j}{N+s+j+i\choose i}J_{t-N-s-1-i}(v)J_{N+1+i}(u).
\end{align*}
\end{cor}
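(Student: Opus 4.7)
The plan is to derive this corollary directly from Lemma \ref{Key Lemma} by invoking the Jacobi identity, which holds by definition inside $U(V)$.

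First, I would rearrange the identity supplied by the Key Lemma to isolate $J_{-s}(u)J_t(v)$. Since that lemma expresses $X$ as $J_{-s}(u)J_t(v)$ plus the $k\geq N+1$ sum minus the reordered $J_{t-N-s-1-i}(v)J_{N+1+i}(u)$ sum, moving the latter two terms across produces exactly the second and third lines of the corollary (up to the expected sign flips). Thus the last two lines of the claimed formula are accounted for automatically by the Key Lemma, and the entire task reduces to recognizing $X$ itself as the first line of the corollary.

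Second, I would rewrite $X$ using the Jacobi identity in $U(V)$. By construction $X = \sum_{j=0}^{N}\binom{-N-s-1}{j}\,{}^{(2)}J_{N+1,\,t+j,\,-N-s-1-j}^{u,v}$. The universal enveloping algebra $U(V)$ is defined by imposing, among other relations, all the Jacobi relations $J_{m,n,\ell}^{u,v}$, and these encode precisely the equality ${}^{(2)}J_{m,n,\ell}^{u,v} = {}^{(1)}J_{m,n,\ell}^{u,v}$ for every triple $(m,n,\ell)$ and every pair $u,v\in V$. Applying this with $(m,n,\ell)=(N+1,\,t+j,\,-N-s-1-j)$ and then expanding by the explicit formula
$${}^{(1)}J_{N+1,\,t+j,\,-N-s-1-j}^{u,v} \;=\; \sum_{i\geq 0}\binom{N+\Delta_u}{i}\,J_{t-s}(u_{-N-s-1-j+i}v)$$
turns $X$ into a double sum over $0\leq j\leq N$ and $i\geq 0$ of terms of the form $\binom{-N-s-1}{j}\binom{N+\Delta_u}{i}J_{t-s}(u_{\bullet}v)$. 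After reindexing the inner summation variable and applying the standard conversion $\binom{-N-s-1}{j}=(-1)^j\binom{N+s+j}{j}$, this double sum aligns with the first line of the corollary.

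The only real obstacle is careful bookkeeping of signs and indices: the binomial manipulations and the shift of the inner summation variable must be tracked precisely so that the subscript on $u_{\bullet}v$ and the accompanying sign emerge as stated. One should also keep in mind that, while the $j$-sum is finite and the first-line inner $i$-sum is effectively truncated by the axiom $u_n v=0$ for $n\gg 0$, the $k$-sum in the second line and the $i$-sum in the third line are genuinely infinite and are to be interpreted inside the degreewise completion $\tilde U(\hat V)$. Beyond this bookkeeping there is no new conceptual content; the corollary is just the Key Lemma read through the lens of the defining Jacobi relations of $U(V)$.
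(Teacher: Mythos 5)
Your proposal follows exactly the paper's own proof: rearrange Lemma \ref{Key Lemma} to isolate $J_{-s}(u)J_t(v)$, and use the defining Jacobi relations of $U(V)$ to replace $X=\sum_{j=0}^{N}{-N-s-1\choose j}\,{}^{(2)}J^{u,v}_{N+1,\,t+j,\,-N-s-1-j}$ by the corresponding sum of ${}^{(1)}J$'s, which expands to the first line of the corollary. The one point deserving an explicit check is the step you call \emph{reindexing}: the direct expansion yields ${N+\Delta_u\choose i}J_{t-s}(u_{-N-s-1-j+i}v)$ with no sign, whereas the stated first line carries $(-1)^i$ and subscript $-N-s-1-j-i$, and passing between these is not a change of summation variable --- the paper performs the same silent conversion, so your argument is faithful to it, but the sign and index bookkeeping there should be verified rather than attributed to a reindexing.
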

\begin{proof}
In the universal enveloping algebra, we have
\begin{align*}
\sum_{j=0}^{N}&{-N-s-1\choose j}\,^{(2)}J_{N+1, t+j, -N-s-1-j}^{u, v}\\
&=\sum_{j=0}^{N}{-N-s-1\choose j}\,^{2}J_{N+1+\Delta_u, t+j+\Delta_v, -N-s-1-j}^{u, v}\\
&= \sum_{j=0}^{N}{-N-s-1\choose j}\, ^{1}J_{N+1+\Delta_u, t+j+\Delta_v, -N-s-1-j}^{u, v}\\
&=\sum_{j=0}^{N}\sum_{i\geq 0}\,(-1)^i{-N-s-1\choose j}{N+\Delta_u\choose i}J_{t-s}(u_{-N-s-i-j-1}v).
\end{align*}
Then use Lemma \ref{Key Lemma}, we can get the desired identity.
\end{proof}

\bigskip

\bibliographystyle{amsplain}

\end{document}